\theoremstyle{plain}
\newtheorem{theorem}{Theorem}[section]
\newtheorem*{theorem*}{Theorem}
\newtheorem{lemma}[theorem]{Lemma}
\newtheorem{proposition}[theorem]{Proposition}
\newtheorem{thmalphabetintro}{Theorem}
\theoremstyle{definition}
\newtheorem{definition}[theorem]{Definition}
\newtheorem{example}[theorem]{Example}
\newtheorem{convention}[theorem]{Convention}
\newtheorem{remark}[theorem]{Remark}
\newtheorem*{question*}{Question}
\newtheorem*{problem*}{Problem}
\newtheorem{open problem}[theorem]{Open Problem}
\newtheorem{remark and notation}[theorem]{Remark and Notation}
\newtheorem{remark and definition}[theorem]{Remark and Definition}
\newtheorem{definition and notation}[theorem]{Definition and Notation}
\newtheorem{notation and convention}[theorem]{Notation and Convention}
\newtheorem{notation and remark}[theorem]{Notation and Remark}
\newtheorem{convention and notation}[theorem]{Convention and Notation}
\def \p {\mathbb{P}}
\def \z {\mathbb{Z}}
\def \r {\mathbb{R}}
\def \c {\mathbb{C}}
\def \i {\mathcal{I}}
\def \l {\mathcal{L}}
\def \H {\textnormal{H}}
\def \im {\operatorname{im}}
\def \codim {\operatorname{codim}}
\def \tor {\operatorname{Tor}}
\def \reg {\operatorname{reg}}
\def \py {\operatorname{py}}
\def \qp {\operatorname{qp}}
\def \re {\operatorname{re}}
\def \im {\operatorname{im}}
\def \spec {\operatorname{Spec}}
\def \det {\operatorname{det}}
\def \Sing {\operatorname{Sing}}
\def \ver {\operatorname{Vert}}
\title[{Quadratic persistence and Pythagoras numbers of varieties}]{On quadratic persistence and Pythagoras numbers of totally real projective varieties}
\author{Jong In Han}
\address{Jong In Han, School of Mathematics, Korea Institute for Advanced Study (KIAS), 85 Hoegi-ro, Dongdaemun-gu, Seoul 02455, Republic of Korea}
\email{jihan09@kias.re.kr}
\author{Jaewoo Jung}
\address{Jaewoo Jung, Global Basic Research Laboratory - Algebra and Geometry of Spaces of Tensors, and Applications (GBRL-AGSTA), Daegu Gyeongbuk Institute of Science and Technology (DGIST), 333 Techno Jungang-daero, Hyeonpung-eup, Dalseong-gun, Daegu 42988, Republic of Korea}
\email{jaewoojung@dgist.ac.kr}
\author{Euisung Park}
\address{Euisung Park, Department of Mathematics, Korea University, Seoul 136-701, Republic of Korea}
\email{euisungpark@korea.ac.kr}
\subjclass[2020]{14P05, 14N05, 14Q30, 13D02}
\keywords{Pythagoras number, Quadratic persistence, Real algebraic varieties, Sum of squares}
\thanks{}
\begin{document}
\begin{abstract}
In this paper, we study the relationship between quadratic persistence and the Pythagoras number of totally real projective varieties. Building upon the foundational work of Blekherman et al. in \cite{MR4397034}, we extend their characterizations of arithmetically Cohen-Macaulay varieties with next-to-maximal quadratic persistence to arbitrary case. Our main result classifies totally real non-aCM varieties of codimension $c$ and degree $d$ that exhibit next-to-maximal quadratic persistence in the cases where $c=3$ and $d \geq 6$ or $c \geq 4$ and $d \geq 2c+3$. We further investigate the quadratic persistence and Pythagoras number in the context of curves of maximal regularity and linearly normal smooth curves of genus 3.
\end{abstract}

\maketitle

\section{Introduction}
Let $X$ be a nondegenerate projective variety in the projective space $\p_{\c}^r = \p^r$.
If the set $X(\r)$ of real points in $X$ is Zariski dense in $X$, i.e., $X$ is \emph{totally real}, then generators of the saturated homogeneous ideal $I(X)$ of $X$ can be chosen to be polynomials with real coefficients. For totally real varieties, we associate the variety $X$ with the ring $R_X := \r[x_0, \dots, x_r]/(I(X) \cap \r[x_0, \dots, x_r])$.
The \emph{Pythagoras number}, denoted by $\py(X)$, of $X$ is defined as the smallest positive integer $t$ such that any sum of squares of linear forms in $R_X$ can be expressed as a sum of at most $t$ squares.

In \cite{MR4397034}, the authors introduced a new algebraic invariant called \textit{quadratic persistence} of $X$ which provides a lower bound of the Pythagoras number of $X$.
For any subset $\Gamma$ of $k$ linearly independent points in $\p^r$, let $\pi_\Gamma:\p^{r}\dashrightarrow \p^{r-k}$ denote the linear projection map from $\Gamma$.
The quadratic persistence $\qp(X)$ of $X \subset \p^r$ is defined as the smallest integer $k$ such that there exists a subset $\Gamma \subset X$ of $k$ linearly independent points for which the ideal $I({\pi_\Gamma(X)})$ contains no quadratic forms.
In \cite{MR4397034}, the authors found fundamental connections between the semi-algebraic invariant $\py(X)$ and various algebraic invariants of $X$ such as quadratic persistence, Green-Lazarsfeld index, and the smallest dimension of variety of minimal degree containing $X$.
For example, if $X$ is totally real, then
\begin{equation}\label{equ:py,qp}\tag{$\star$}
    r + 1 - \qp(X) \le \py(X).
\end{equation}
Based on such results concerning $\py(X)$, they obtained complete classifications and another characterizations of varieties with minimal Pythagoras number and arithmetically Cohen-Macaulay varieties with next to minimal Pythagoras number:
\begin{thmalphabetintro}\cite[Theorem 1.4]{MR4397034}\label{thm:BSSV1}
Suppose $X$ is a nondegenerate totally real variety in $\p^r$.
    Then the following are equivalent:
    \begin{enumerate}[\rm (i)]
        \item $\qp(X) = \codim(X)$;
        \item $\py(X) = \dim(X) + 1$;
        \item $\deg(X) = \codim(X) + 1$, i.e., $X$ is a variety of minimal degree.
    \end{enumerate}
\end{thmalphabetintro}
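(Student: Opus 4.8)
The plan is to prove the cycle of implications (ii) $\Rightarrow$ (i) $\Rightarrow$ (iii) $\Rightarrow$ (ii). I would first record two inequalities. One is the given bound \eqref{equ:py,qp}. The other is the elementary estimate $\qp(X)\le\codim(X)$: choosing a general set $\Gamma$ of $c:=\codim(X)$ points of $X$ (which are linearly independent, since general points of a nondegenerate variety lie in linearly general position), the projection $\pi_\Gamma$ maps $X$ dominantly onto $\p^{\,r-c}=\p^{\dim X}$, so the image is all of $\p^{\dim X}$ and its ideal contains no nonzero forms at all; hence $\qp(X)\le c$. Combining the two gives the uniform bound
\[
  \py(X)\ \ge\ r+1-\qp(X)\ \ge\ r+1-\codim(X)\ =\ \dim X+1 .
\]
The implication (ii) $\Rightarrow$ (i) is then immediate: if $\py(X)=\dim X+1$, then \eqref{equ:py,qp} forces $\qp(X)\ge r-\dim X=\codim(X)$, and with $\qp(X)\le\codim(X)$ we conclude $\qp(X)=\codim(X)$.

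The geometric core is (i) $\Rightarrow$ (iii): maximal quadratic persistence should imply minimal degree. My approach is iterated inner projection. Represent a quadric $q\in I(X)_2$ by its symmetric matrix $M_q$ and a point $p\in X$ by a vector $\hat p$; then a quadric descends under the projection from $p$ exactly when $\hat p\in\ker M_q$, so the quadrics of $X':=\pi_{\{p\}}(X)$ are identified with $\{q\in I(X)_2 : \hat p\in\ker M_q\}$. From this I read off $\qp(X')\ge\qp(X)-1$ (lift an optimal configuration for $X'$ and prepend $p$), while for general $p$ one has $\qp(X')\le\codim(X')=\codim(X)-1$; hence maximality is preserved, $\qp(X')=\codim(X)-1$. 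Iterating $c$ times yields a chain $X=X_0\dashrightarrow\cdots\dashrightarrow X_c$ of varieties of maximal quadratic persistence with $X_c\subseteq\p^{\dim X}$, and a dimension count forces $X_c=\p^{\dim X}$.

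The remaining---and genuinely hard---point is to deduce from this that $\deg X=\codim(X)+1$. The inequalities $\deg X_i\ge\codim(X_i)+1$ together with $\deg X_c=1$ only recover the a priori bound $\deg X\ge\codim(X)+1$; to get equality I must show that each inner projection is birational, i.e.\ drops the degree by exactly one. This is where I expect the main obstacle to lie, and I would resolve it through Castelnuovo--Mumford regularity: the persistence of the full space of quadrics under generic inner projection is precisely a manifestation of $2$-regularity, and by the Eisenbud--Green--Hulek--Popescu classification an irreducible nondegenerate $2$-regular variety is exactly a variety of minimal degree. Thus I would recast (i) $\Rightarrow$ (iii) as ``$\qp(X)=\codim(X)\Rightarrow\reg(X)\le2$'', showing that the surviving quadric through any $c-1$ chosen points (which maximality guarantees) carries the linear syzygies forcing $\reg(X)\le2$; the delicate bookkeeping for special, e.g.\ conical, projection centres is the part demanding the most care.

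Finally, (iii) $\Rightarrow$ (ii). The lower bound $\py(X)\ge\dim X+1$ is already in hand, so only $\py(X)\le\dim X+1$ remains. Here I would invoke the Bertini--Del Pezzo classification of varieties of minimal degree---quadric hypersurfaces, rational normal scrolls, the Veronese surface $v_2(\p^2)\subset\p^5$, and cones over these---together with the fact that on such varieties the cone of sums of squares coincides with the cone of nonnegative quadrics. The task then reduces to a Gram-matrix rank estimate: every nonnegative quadratic form on $R_X$ admits a positive semidefinite Gram matrix of rank at most $\dim X+1$. I would establish this uniformly from the arithmetically Cohen--Macaulay, regularity-$2$ structure of the coordinate ring, handling the conical case by induction on the dimension of the vertex. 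Assembling (ii) $\Rightarrow$ (i) $\Rightarrow$ (iii) $\Rightarrow$ (ii) closes the cycle and proves the equivalence.
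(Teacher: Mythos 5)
Note first that the paper does not prove this statement: it is Theorem~A, quoted verbatim from \cite[Theorem~1.4]{MR4397034}, so your proposal can only be measured against the proof in that reference. Your easy steps are fine: $\qp(X)\le\codim(X)$ via general successive inner projections, the chain $\py(X)\ge r+1-\qp(X)\ge\dim X+1$, and (ii)$\Rightarrow$(i) are all correct. The genuine gap is in (i)$\Rightarrow$(iii). You correctly identify the obstacle (each inner projection must drop the degree by exactly one), but your proposed resolution --- recasting the implication as ``$\qp(X)=\codim(X)\Rightarrow\reg(X)\le 2$'' and invoking the classification of $2$-regular varieties --- is circular: for irreducible nondegenerate varieties, $2$-regularity is \emph{equivalent} to minimal degree, so this recasting merely restates the goal, and the mechanism you offer (``the surviving quadric carries the linear syzygies forcing $\reg(X)\le 2$'') is not an argument. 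A single quadric surviving $c-1$ projections gives no control over the full resolution or the higher cohomology that $2$-regularity requires; indeed the paper's own toolkit points the opposite way, since \Cref{thm:qpmain}(4) gives $\qp(X)\ge\ell(X)$ and Green's $\mathcal{K}_{p,1}$-theorem converts $\ell(X)=c$ into minimal degree, whereas you would need the reverse comparison in the extremal case. The standard fix, and the route of \cite{MR4397034}, is elementary: in characteristic zero, inner projection of a nondegenerate variety of codimension at least $2$ from a general point is birational onto a nondegenerate image and drops the degree by exactly one. Since general points compute $\qp$ (\Cref{lem:qp}), after $c-1$ general inner projections the hypothesis $\qp(X)=c$ guarantees that the image, a nondegenerate irreducible hypersurface in $\p^{n+1}$ of degree $d-c+1$, lies on a quadric; its ideal is principal, so $d-c+1=2$, i.e.\ $d=c+1$. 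You had this degree bookkeeping in hand and abandoned it for an unsubstantiated regularity detour.

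A secondary gap lies in (iii)$\Rightarrow$(ii), the real-algebraic heart of the theorem. Your sketch names the right ingredients, but the key assertion --- every nonnegative quadratic form on $R_X$ admits a positive semidefinite Gram matrix of rank at most $\dim X+1$ when $X$ has minimal degree --- \emph{is} the theorem of Blekherman--Smith--Velasco, and ``I would establish this uniformly from the aCM, regularity-$2$ structure'' is a placeholder, not a proof. If citing that result is permitted (as this paper cites \cite{MR4397034} wholesale), the implication closes; otherwise it remains open in your write-up. Note also that total reality, which the statement assumes, never actually enters your argument, yet it is exactly what makes the sums-of-squares analysis on $R_X$ meaningful; a correct proof must use the Zariski density of $X(\r)$ at this step.
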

\begin{thmalphabetintro}\cite[Theorem 1.5]{MR4397034}\label{thm:BSSV2}
If $X$ is a nondegenerate totally real arithmetically Cohen-Macaulay (aCM) variety with $\deg(X)\geq \codim(X)+2$, then the following are equivalent:
    \begin{enumerate}[\rm (i)]
        \item $\qp(X) = \codim(X) - 1$;
        \item $\py(X) = \dim(X) + 2$;
        \item $\deg(X) = \codim(X) + 2$, or $X$ is a codimension 1 subvariety of a variety of minimal degree.     
    \end{enumerate}
\end{thmalphabetintro}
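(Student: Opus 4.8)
The plan is to prove the cycle (ii) $\Rightarrow$ (i) $\Rightarrow$ (iii) $\Rightarrow$ (ii), reducing everything to the inequality \eqref{equ:py,qp}, to \Cref{thm:BSSV1}, and to two genuinely geometric inputs that I will isolate. Throughout set $n=\dim(X)$ and $c=\codim(X)$, so that $r+1=n+c+1$. Because $\deg(X)\ge c+2$, \Cref{thm:BSSV1} excludes the minimal-degree case and yields the two standing bounds $\qp(X)\le c-1$ and $\py(X)\ge n+2$. The implication (ii) $\Rightarrow$ (i) is then purely formal: assuming $\py(X)=n+2$, substitution into \eqref{equ:py,qp} gives $n+2\ge r+1-\qp(X)=n+c+1-\qp(X)$, hence $\qp(X)\ge c-1$, and together with the standing bound this forces $\qp(X)=c-1$.

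The engine for the remaining implications is a descent lemma for the Pythagoras number: if $X\subseteq Y$ with $Y$ a nondegenerate totally real variety of minimal degree, then $\py(X)\le \py(Y)=\dim(Y)+1$. Indeed, since $X$ and $Y$ are nondegenerate, the degree-one pieces of $R_X$ and $R_Y$ are both the full space of linear forms, so any sum of squares $q=\sum_i\overline{\ell_i}^{\,2}$ in $(R_X)_2$ lifts to the genuine sum of squares $\sum_i\ell_i^2$ in $(R_Y)_2$; representing the latter with at most $\py(Y)$ squares in $R_Y$ and pushing the resulting identity forward along the surjection $R_Y\twoheadrightarrow R_X$ (legitimate because $I(Y)\subseteq I(X)$) represents $q$ with at most $\py(Y)$ squares in $R_X$. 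The equality $\py(Y)=\dim(Y)+1$ is \Cref{thm:BSSV1}. Consequently, whenever $X$ lies on an $(n+1)$-dimensional variety of minimal degree one gets $\py(X)\le n+2$, and combined with the standing lower bound, $\py(X)=n+2$, i.e.\ (ii). This proves (iii) $\Rightarrow$ (ii): in the divisor case of (iii) the ambient minimal-degree variety has dimension $n+1$ by hypothesis, while in the case $\deg(X)=c+2$ the required $(n+1)$-dimensional variety of minimal degree through $X$ is furnished by the structure theory of aCM varieties of almost minimal degree.

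It remains to prove (i) $\Rightarrow$ (iii), which is the heart of the matter. Here I would exploit the connection---also underlying \eqref{equ:py,qp}---between $\qp(X)$ and the minimal dimension $\beta(X)$ of a variety of minimal degree containing $X$, noting that the descent lemma and \eqref{equ:py,qp} already force $\beta(X)\ge n+1$. The task is then to show that the hypothesis $\qp(X)=c-1$ together with $X$ being aCM pins $\beta(X)$ down to exactly $n+1$, so that $X$ is a divisor on an $(n+1)$-dimensional variety of minimal degree, landing us in the second case of (iii), while the degree of this divisor accounts for the first case. Concretely, I would reinterpret $\qp(X)=c-1$ through the minimal free resolution of $R_X$: for aCM $X$ this should say that the $2$-linear strand has length $c-1$ (equivalently $X$ satisfies $N_{2,c-1}$ but not $N_{2,c}$), and I would invoke the classification of such varieties, in the spirit of Eisenbud--Green--Hulek--Popescu and Brodmann--Schenzel, to produce the minimal-degree variety and read off the dichotomy.

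The main obstacle is exactly this last step: upgrading the inequality $\beta(X)\ge n+1$ to the \emph{existence} of an $(n+1)$-dimensional variety of minimal degree through $X$, and simultaneously controlling its degree as a divisor. The formal implications and the descent lemma are routine, but the classification of aCM varieties with $\qp(X)=c-1$---those with a length-$(c-1)$ linear strand---is where the genuine algebro-geometric work lies, and it is also the input needed to complete the $\deg(X)=c+2$ case of (iii) $\Rightarrow$ (ii).
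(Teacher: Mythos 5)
Your formal skeleton is fine: the implication (ii)$\Rightarrow$(i) via \eqref{equ:py,qp} together with the standing bounds $\qp(X)\le c-1$ and $\py(X)\ge n+2$ (which indeed follow from \Cref{thm:BSSV1} once $\deg(X)\ge c+2$) is correct, and your descent lemma is precisely \Cref{thm:qpmain}(1). Note, however, that \Cref{thm:BSSV2} is imported from \cite{MR4397034} and the present paper contains no proof of it; measured against the machinery the paper does develop, both of your substantive implications have genuine gaps. In (iii)$\Rightarrow$(ii) your descent lemma requires the ambient minimal-degree variety $Y$ to be \emph{totally real}, whereas (iii) only furnishes a complex one, and you apply the lemma without addressing this. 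That is not a formality: producing a totally real $Y$ is exactly the content of the paper's totally real $\mathcal{K}_{p,1}$-theorem (\Cref{lem:totally_real_Kp1}), proved by lifting a real Koszul class and comparing $\beta_{c-1,1}(X)$ with $\beta_{c-1,1}(Y)$. Worse, in the case $\deg(X)=c+2$ your claim that the structure theory of almost-minimal-degree varieties furnishes an $(n+1)$-dimensional minimal-degree variety through $X$ fails precisely in the aCM case you are in: the dichotomy quoted in \Cref{betti_remark}(3) separates $d=c+2$ from the divisor case exactly because the divisor conclusion can fail there, and the existence result the paper invokes, \cite{MR2274517}, applies only to \emph{non}-aCM varieties of degree $c+2$. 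A concrete counterexample appears in the paper's own Example in \Cref{sec:Prelim}: $\nu_3(\p^2)\subset\p^9$ is aCM with $d=c+2=9$ and is contained in no threefold of minimal degree \cite[p.~151]{MR739785}, yet $\py(\nu_3(\p^2))=4=\dim+2$. So for $d=c+2$ the bound $\py(X)\le n+2$ cannot come from an ambient minimal-degree variety at all; it requires the aCM-specific arguments of \cite{MR4397034}.

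The implication (i)$\Rightarrow$(iii) is likewise not established. You correctly reduce it to showing that $\qp(X)=c-1$ forces the quadratic strand to have length $c-1$, i.e.\ $\beta_{c-1,1}(X)\ne 0$, but the only inequality available, \Cref{thm:qpmain}(4), is $\qp(X)\ge \ell(X)$, which points the wrong way: $\qp(X)=c-1$ only gives $\ell(X)\le c-1$. The converse bound for aCM varieties --- obtained in \cite{MR4397034} via Artinian reductions and partial elimination ideals, and whose codimension-three analogue is \Cref{prop:qp(X) bigger than 2} of this paper --- is the actual heart of the theorem, and your sketch simply defers it. Also, once $\ell(X)=c-1$ is in hand, no EGHP- or Brodmann--Schenzel-type classification is needed: Green's $\mathcal{K}_{p,1}$-theorem as quoted in \Cref{betti_remark}(3) yields the dichotomy of (iii) verbatim (either $d=c+2$, or $d\ge c+3$ and $X$ is a divisor on a variety of minimal degree). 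So your appeal to heavy classification both overshoots (it is unnecessary after the bridge from $\qp$ to $\ell$) and undershoots (it does not supply that bridge).
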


Along this line, it is natural to ask whether the statement of \Cref{thm:BSSV2} still holds without the aCM condition.
Our main goal in this paper is to extend the classification and characterization in \Cref{thm:BSSV2} to general (possibly non-aCM) varieties having next to minimal Pythagoras number.

In our first main result below, we generalize \Cref{thm:BSSV2} as follows.
Note that the Pythagoras numbers and quadratic persistence of codimension $2$ varieties are already completely known (\textit{cf.} \Cref{rem:codim_two}).
\begin{theorem}\label{mainthm}
    Let $X\subset \p^{r}$ be a totally real nondegenerate projective variety of dimension $n$, codimension $c\geq 3$, and degree $d$.
    \begin{enumerate}[\rm (1)]
        \item\label{mainthm:VAMD} If $d=c+2$, then $\qp(X) = c - 1$ and $\py(X) = n +2$.
        \item\label{mainthm:general} Suppose either $c = 3$ and $d \geq 6$, or $c \geq 4$ and $d \geq 2c+3$.
        Then the following are equivalent:
        \begin{enumerate}[\rm (i)]
            \item $\qp(X) = c-1$;
            \item $\py(X) = n+2$;
            \item $X$ is a codimension $1$ subvariety of a variety of minimal degree.
        \end{enumerate}
    \end{enumerate}
\end{theorem}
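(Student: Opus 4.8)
The plan is to use the inequality \eqref{equ:py,qp} together with \Cref{thm:BSSV1} as the backbone and to keep the two assertions separate. Write $r=n+c$, so that \eqref{equ:py,qp} reads $\py(X)\ge n+c+1-\qp(X)$, and recall that $\qp(X)\le\codim(X)=c$ holds for every nondegenerate $X$, with equality only in minimal degree by \Cref{thm:BSSV1}; since $d\ge c+2$ we therefore have $\qp(X)\le c-1$ throughout. With these two facts I organize part \eqref{mainthm:general} as the cycle (ii)$\Rightarrow$(i)$\Rightarrow$(iii)$\Rightarrow$(ii). The implication (ii)$\Rightarrow$(i) is then immediate: if $\py(X)=n+2$ then $n+2\ge n+c+1-\qp(X)$ forces $\qp(X)\ge c-1$, and combined with $\qp(X)\le c-1$ this gives $\qp(X)=c-1$. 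The upper bound $\py(X)\le n+2$ needed to close the cycle will be extracted from the geometric condition (iii).

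Both implications flowing out of (iii) I would obtain from monotonicity of quadrics and of sums of squares under the inclusion $X\subseteq Y$, where $Y$ is a variety of minimal degree with $\dim Y=n+1$, so that $\codim Y=c-1$ and $\qp(Y)=c-1$ by \Cref{thm:BSSV1}. For any linearly independent $\Gamma\subset X$ with $|\Gamma|=c-2$ we have $\pi_\Gamma(X)\subseteq\pi_\Gamma(Y)$, hence $I(\pi_\Gamma X)_2\supseteq I(\pi_\Gamma Y)_2\ne 0$ because $c-2<\qp(Y)$; thus no set of $c-2$ points annihilates the quadrics of $X$, giving $\qp(X)\ge c-1$ and hence (iii)$\Rightarrow$(i). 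For the Pythagoras bound I use the same containment at the level of squares: given a sum of squares $\sum_i\ell_i^2$ in $R_X$, its image in $R_Y$ is a sum of squares, so by $\py(Y)=\dim Y+1=n+2$ it equals $\sum_{j=1}^{n+2}m_j^2$ modulo $I(Y)_2$, and since $I(Y)_2\subseteq I(X)_2$ the same identity holds modulo $I(X)_2$, proving $\py(X)\le n+2$. (One first checks, routinely, that a totally real $X$ admits a totally real such $Y$.) Together with the lower bound from \eqref{equ:py,qp} this yields (iii)$\Rightarrow$(ii).

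For part \eqref{mainthm:VAMD} I would invoke the classification of varieties of almost minimal degree $d=c+2$. If $X$ is arithmetically Cohen--Macaulay, then it satisfies condition (iii) of \Cref{thm:BSSV2} through the alternative $\deg X=\codim X+2$, so \Cref{thm:BSSV2} directly gives $\qp(X)=c-1$ and $\py(X)=n+2$ with no new work. If $X$ is not aCM, then by the Brodmann--Schenzel structure theorem it is the projection, isomorphic away from one point, of a variety $\tilde X\subset\p^{r+1}$ of minimal degree, for which $\codim\tilde X=c+1$, $\qp(\tilde X)=c+1$, and $\py(\tilde X)=n+1$; I would then compute $\qp$ and $\py$ through this projection via explicit projection lemmas for the two invariants, the non-normality of $X$ being exactly what drops the quadratic persistence to $c-1$ and raises the Pythagoras number to $n+2$. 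The reason this case is stated separately, rather than folded into the equivalence of part \eqref{mainthm:general}, is that such non-aCM varieties (and the del Pezzo varieties) realize $\qp(X)=c-1$ without being divisors on a variety of minimal degree, so (iii) genuinely fails at $d=c+2$.

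The crux, and the step I expect to be the main obstacle, is (i)$\Rightarrow$(iii) in part \eqref{mainthm:general}: that $\qp(X)=c-1$ forces $X$ to be a divisor on a variety of minimal degree once $d$ is large. Here I would fix $\Gamma\subset X$ with $|\Gamma|=c-1$ and $I(\pi_\Gamma X)_2=0$ and analyze $\pi_\Gamma\colon X\dashrightarrow\bar X\subset\p^{n+1}$. When $\dim\bar X=n$ the image is a hypersurface of degree at least $3$ carrying no quadrics, and the hypothesis that no $c-2$ of the points already annihilate the quadrics of $X$ translates into an extremal bound on the number and linear syzygies of the quadrics of $X$, equivalently into a Castelnuovo-type maximal-regularity condition. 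The plan is to feed this into the structure theory of varieties of maximal (sectional) regularity and conclude that $X$ lies on a rational normal scroll of dimension $n+1$ as a divisor; the degree thresholds $c=3,\ d\ge 6$ and $c\ge 4,\ d\ge 2c+3$ are precisely the ranges in which this extremal behavior forces the scroll of the correct dimension and rules out the sporadic low-degree configurations (del Pezzo-type and non-aCM almost-minimal examples) that have $\qp=c-1$ but no such scroll. The degenerate case $\dim\bar X<n$, where the chosen points fail to be in sufficiently general position, must be treated separately, and controlling the quadrics of $X$ along the projection is the technical heart of the whole argument.
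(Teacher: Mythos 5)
Your outline gets the easy implications right ((ii)$\Rightarrow$(i) from \eqref{equ:py,qp}, (iii)$\Rightarrow$(i) from monotonicity of $\qp$ under containment, and the sum-of-squares transfer for (iii)$\Rightarrow$(ii)), but it has two genuine gaps. First, the step you dismiss with ``one first checks, routinely, that a totally real $X$ admits a totally real such $Y$'' is not routine: it is the paper's totally real $\mathcal{K}_{p,1}$-theorem (\Cref{lem:totally_real_Kp1}), and it is singled out in the introduction as the most delicate point. The variety $Y$ is produced from complex linear syzygies, and to realize it over $\r$ one must pick a nonzero class $\gamma\in K_{c-2,2}(I_\r(X),V_\r)$ (which exists since $\beta_{c-1,1}(X)\neq 0$), use $\beta_{c-1,1}(X)=\beta_{c-1,1}(Y)=c-1$ to identify $K_{c-2,2}(I(X),V_\c)=K_{c-2,2}(I(Y),V_\c)$, and show that the syzygy ideal of $\gamma$ extends to $I(Y)$, so that $I(Y)$ is generated by real forms; total reality then follows because $\Sing(Y)$ is degenerate while $X(\r)\subset Y$ is not. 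Second, your part \eqref{mainthm:VAMD} rests on a false premise: by the very structure theorem you invoke \cite{MR2274517}, a non-aCM variety of degree $c+2$ \emph{is} a divisor on an $(n+1)$-dimensional variety of minimal degree, so your parenthetical claim that these examples have $\qp(X)=c-1$ ``without being divisors on a variety of minimal degree'' is wrong (only the aCM del Pezzo case breaks (iii) at $d=c+2$). Moreover, the ``explicit projection lemmas'' you propose for $\py$ do not exist in usable form: a sum of squares on $X$ is one on $\tilde{X}$, but the $n+1$ linear forms furnished by $\py(\tilde{X})=n+1$ need not descend to $R_X$. The paper's actual mechanism is precisely the divisor containment $X\subset Y$ together with \Cref{lem:totally_real_Kp1}, giving $\py(X)\leq\py(Y)=n+2$.

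The crux, (i)$\Rightarrow$(iii), remains unproved in your proposal, and I do not see your sketch becoming a proof: note that \Cref{thm:qpmain}(4) only gives $\qp(X)\geq\ell(X)$, so $\qp(X)=c-1$ yields no linear syzygies at all, and nothing in your one-shot projection from $c-1$ points produces the needed $\beta_{c-1,1}(X)\neq 0$; nor is $X$ with $\qp(X)=c-1$ extremal for regularity, so the structure theory of varieties of maximal (sectional) regularity has no purchase here. The paper's proof has two ingredients absent from your plan: (a) the codimension-three base case (\Cref{mainthm:codim3}), whose heart is the equivalence $\qp(X)\geq 2\Leftrightarrow\beta_{2,1}(X)>0$, proved by analyzing pencils of quadrics through $X$ --- a reducible complete intersection of two quadrics forces $X$ into a cubic variety of minimal degree, while an irreducible one has degenerate singular locus (\Cref{lem:degeneracy singular locus}), so the inner projection from a general point of $X$ still satisfies a quadric; and (b) the lifting theorem (\Cref{lp_2c+3}): after dropping to codimension three by general inner projections (each lowering $\qp$ by exactly one), one lifts ``divisor on a variety of minimal degree'' back up one projection at a time, via partial elimination ideals and a Jacobian/tangent-space estimate giving $\dim(K_1(I(X),q))_1\geq c-1$, hence $\dim I(X)_2=\binom{c}{2}$, and then Castelnuovo's results: for $d\geq 2c+1$ one has $\dim I(X)_2\leq\binom{c}{2}$, and for $d\geq 2c+3$ equality forces $X$ to be a divisor on a variety of minimal degree. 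This is where the threshold $d\geq 2c+3$ actually enters, not through ruling out sporadic low-degree configurations as you suggest. Without (a) and (b), or a genuine substitute, the main implication of the theorem is missing from your argument.
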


Regarding the proof of \Cref{mainthm} \eqref{mainthm:VAMD}, when X is aCM, \Cref{thm:BSSV2} implies that $\qp(X) = c-1$ and $\py(X) = n+2$. 
If X is not aCM, it was shown in \cite{MR2274517} that there always exists an $(n+1)$-dimensional variety $Y$ of minimal degree containing $X$.
By combining this fact with the results of \cite{MR4397034}, we can still conclude that $\qp(X) = c-1$, and the above inequality yields $\py(X) \geq  n+2$. 
The most delicate part of the proof is to show that $\py(X) \leq n+2$. 
To achieve this, we prove ``totally real $\mathcal{K}_{p,1}$-theorem" (\Cref{lem:totally_real_Kp1}), which guarantees that $Y$ is totally real.

In \Cref{mainthm} \eqref{mainthm:general}, the implications $\rm{(iii)}\Rightarrow\rm{(ii)}\Rightarrow \rm{(i)}$ can be proven for all $d \geq c+3$ by using the totally real $\mathcal{K}_{p,1}$-theorem and the results of \cite{MR4397034}, respectively. 
The most difficult part of the proof is establishing the implication $\rm{(i)}\Rightarrow\rm{(iii)}$, which consists of two main steps. 
We first handle the case $c = 3$, and then complete the proof for the case $c \geq 4$ and $d\geq 2c+3$ by solving the following \emph{Lifting Problem} in such a case. \\

\noindent\textbf{Lifting Problem.}
    Let $X$ be a nondegenerate projective variety in $\p^{r}$ of dimension $n$, codimension $c$, and degree $d\geq c+3$.
    If $X_p = \pi_p(X)$ in $\p^{r-1}$ is contained in an $(n+1)$-dimensional variety of minimal degree for general $p\in X$, then $X$ is contained in an $(n+1)$-dimensional variety of minimal degree.\\

In fact, if Lifting Problem could be solved for all $d  \geq c+3$, then \Cref{mainthm} \eqref{mainthm:general} would hold for all such $d$ and hence the aCM assumption can be removed from \Cref{thm:BSSV2} (\textit{cf.} \Cref{transform_to_lp}).
We believe that introducing this perspective represents a significant step toward a complete solution to the problem.

For the remainder of this paper, it is worth mentioning that all known cases satisfy equality in \eqref{equ:py,qp}.
In this context, we investigate the quadratic persistence and Pythagoras number of two kinds of curves to test the sharpness of \eqref{equ:py,qp}: \\
\begin{enumerate}
    \item[(a)] curves of maximal regularity;
    \item[(b)] linearly normal smooth curves of genus at most three. \\
\end{enumerate}

First, let $\mathcal{C} \subset \p^r$ be a nondegenerate projective curve of degree $d \geq r+2$. In \cite{MR0704401}, the authors proved that the Castelnuovo-Mumford regularity of $\mathcal{C}$, denoted by $\reg(\mathcal{C})$, is always less than or equal to $d-r+2$. Due to \cite{MR1992539}, $\mathcal{C}$ is called a \textit{curve of maximal regularity} if $\reg(\mathcal{C})=d-r+2$.

The following is our second result.
\begin{theorem}\label{mainthm:cmr}
    Let $\mathcal{C} \subset \mathbb{P}^r$ $(r \geq 4)$ be a totally real curve of maximal regularity with degree $d \geq r + 2$.
    Then $\qp(\mathcal{C})\in \{r-3, r-2\}$ and 
    $$\py(\mathcal{C}) = r + 1 - \qp(\mathcal{C}).$$ 
    Moreover, the following are equivalent:
    \begin{enumerate}[\rm (i)]
        \item $\qp(\mathcal{C}) = r-2$;
        \item $\py(\mathcal{C})=3$;
        \item $\mathcal{C}$ is contained in a surface of minimal degree.
    \end{enumerate}
\end{theorem}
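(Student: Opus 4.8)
The plan is to exhibit an explicit $3$-dimensional variety of minimal degree through $\mathcal{C}$, bound $\py(\mathcal{C})$ from above by restricting sums of squares off of it, and then play this against \eqref{equ:py,qp} to trap both invariants; the two possible values of $\qp(\mathcal{C})$ will be separated by whether $\mathcal{C}$ drops onto a minimal-degree \emph{surface}.

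First I would invoke the structure theory of curves of maximal regularity: a nondegenerate curve $\mathcal{C}\subset\mathbb{P}^r$ with $d\ge r+2$ and $\reg(\mathcal{C})=d-r+2$ carries a (unique) extremal $(d-r+2)$-secant line $L$, and $L$ is defined over $\r$ since it is intrinsic to the totally real curve $\mathcal{C}$. Projecting from $L$ produces a nondegenerate curve $\bar{\mathcal{C}}=\pi_L(\mathcal{C})\subset\mathbb{P}^{r-2}$ of degree $d-(d-r+2)=r-2$, i.e.\ a rational normal curve, which is again totally real as the image of a totally real curve under a real projection. Hence $\mathcal{C}$ lies on the join $W:=L\ast\bar{\mathcal{C}}$, a totally real $3$-dimensional cone of degree $r-2=\codim_{\mathbb{P}^r}(W)+1$; that is, $W$ is a variety of minimal degree.

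Because $d\ge r+2>r=\codim(\mathcal{C})+1$, the curve $\mathcal{C}$ is not of minimal degree, so \Cref{thm:BSSV1} gives $\qp(\mathcal{C})\le\codim(\mathcal{C})-1=r-2$. On the other hand, every linear form on $\mathcal{C}$ is the restriction of a linear form on $\mathbb{P}^r$, hence also restricts to $W$, so any representation of $q\in(R_{\mathcal{C}})_2$ as a sum of squares of linear forms lifts to such a representation of some $\tilde q\in(R_W)_2$ mapping to $q$ under $R_W\twoheadrightarrow R_{\mathcal{C}}$; since $\py(W)=\dim W+1=4$ by \Cref{thm:BSSV1}, rewriting $\tilde q$ on $W$ and restricting back yields $\py(\mathcal{C})\le 4$. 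Feeding this into \eqref{equ:py,qp} gives $r+1-\qp(\mathcal{C})\le\py(\mathcal{C})\le 4$, whence $\qp(\mathcal{C})\ge r-3$ and therefore $\qp(\mathcal{C})\in\{r-3,r-2\}$. The same lifting argument applied to a totally real minimal-degree \emph{surface} (available through \Cref{lem:totally_real_Kp1}) proves the easy implication $\mathrm{(iii)}\Rightarrow\py(\mathcal{C})\le 3$, after which \eqref{equ:py,qp} forces $\qp(\mathcal{C})\ge r-2$, giving $\mathrm{(iii)}\Rightarrow\mathrm{(i)}$.

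It then remains to prove $\mathrm{(i)}\Rightarrow\mathrm{(iii)}$: if $\qp(\mathcal{C})=r-2$ then $\mathcal{C}$ lies on a surface of minimal degree. Granting this, everything assembles: when $\qp(\mathcal{C})=r-3$ we have $\py(\mathcal{C})\le 4=r+1-\qp(\mathcal{C})$ from $W$ and the reverse from \eqref{equ:py,qp}, while when $\qp(\mathcal{C})=r-2$ the surface furnished by $\mathrm{(iii)}$ gives $\py(\mathcal{C})\le 3=r+1-\qp(\mathcal{C})$; thus $\py(\mathcal{C})=r+1-\qp(\mathcal{C})$ in all cases and $\mathrm{(i)}\Leftrightarrow\mathrm{(ii)}$ is immediate. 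The implication $\mathrm{(i)}\Rightarrow\mathrm{(iii)}$ is the crux. For $r=4$ it is exactly \Cref{mainthm} (there $c=3$ and $d\ge r+2=6$). For $r\ge 5$ with $r+2\le d\le 2r$ the degree hypotheses of \Cref{mainthm} fail, so I would argue directly on the cone $W$ via the contrapositive: assuming $\mathcal{C}$ lies on no minimal-degree surface, I would produce a set of $r-3$ points of $\mathcal{C}$ whose projection sends $\mathcal{C}$ to a quadric-free curve in $\mathbb{P}^{3}$, giving $\qp(\mathcal{C})\le r-3$, hence $\qp(\mathcal{C})\neq r-2$. Here the planar ruling of $W$ and the incidence of $\mathcal{C}$ with the vertex line $L$ are used to control which quadrics through $\mathcal{C}$ can be destroyed under successive point-projections. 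This last step—replacing the general Lifting Problem by the explicit geometry of maximal-regularity curves—is where I expect the genuine difficulty to lie.
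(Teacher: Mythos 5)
Your framework is sound and, for everything except the crux, it matches the paper: the totally real threefold cone $W=J(L,S(r-2))=S(0,0,r-2)$ and the bound $\py(\mathcal{C})\le\py(W)=4$ are exactly how the paper gets $\py(\mathcal{C})=4$ in the $\qp(\mathcal{C})=r-3$ case, the implications $\mathrm{(iii)}\Rightarrow\mathrm{(ii)}\Rightarrow\mathrm{(i)}$ come from the totally real $\mathcal{K}_{p,1}$-theorem as you say, and the case $r=4$ is indeed settled by \Cref{mainthm:codim3}. (One stylistic difference: you justify the reality of $L$ by conjugation-invariance of the unique extremal secant line, which works since a conjugation-stable linear subspace is cut out by real linear forms; the paper instead realizes $I(L)$ as the ideal quotient $J:(f)$ of ideals with real generators, then invokes \Cref{prop:tr_projection} for $S(r-2)$. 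Also, you derive $\qp(\mathcal{C})\ge r-3$ from \eqref{equ:py,qp} and $\py(\mathcal{C})\le 4$, which uses total reality, whereas the paper gets it algebraically from $\qp(\mathcal{C})=\qp(\overline{\mathcal{C}})+r-4$ with $\overline{\mathcal{C}}\subset\p^4$ and $\qp(\overline{\mathcal{C}})\in\{1,2\}$; both are fine here.)

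The genuine gap is that you never prove $\mathrm{(i)}\Rightarrow\mathrm{(iii)}$ for $r\ge 5$ --- you only announce a plan (``I would produce a set of $r-3$ points \ldots'') and yourself flag it as the expected difficulty. Moreover, the contrapositive route you sketch runs the implication in an awkward direction: to show that ``no minimal-degree surface through $\mathcal{C}$'' forces $\qp(\mathcal{C})\le r-3$, you must control \emph{all} quadrics through successive inner projections, and the natural tool for that is precisely the codimension-three classification applied to the projected curve --- which tells you the projection $\overline{\mathcal{C}}\subset\p^4$ with $\qp(\overline{\mathcal{C}})=2$ lies on $S(1,2)$, i.e.\ it gives information only \emph{downstairs}. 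To contradict the hypothesis upstairs you must lift that surface back along the projections, and this is exactly the content of the paper's \Cref{lem:maxreg_lifting}: if the inner projection $\mathcal{C}_p$ from a general point lies on $S(1,r-3)$, then $\mathcal{C}$ lies on $S(1,r-2)$. Its proof is a genuine geometric argument --- one lifts each ruling line of $S(1,r-3)$ through the three points $l(t)\in L_p$, $s(t)\in S(r-3)$, $c(t)\in\mathcal{C}_p$ by comparing the projections $\pi_p$, $\pi_L$, $\pi_{L_p}$, $\pi_{\langle p,L\rangle}$ and using that $\pi_p$ restricts to isomorphisms on $\mathcal{C}$ and on $S(r-2)$ to show the lifted points $\widetilde{l}(t),\widetilde{s}(t),\widetilde{c}(t)$ are collinear. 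Without this lemma (or an equivalent lifting statement), your argument establishes the theorem only for $r=4$; the ``explicit geometry of maximal-regularity curves'' you defer to is not a routine verification but the main new ingredient of the paper's proof, so as written the proposal is incomplete at its central step.
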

 
Next, we consider a linearly normal smooth curve $\mathcal{C} \subset \p^r$ of genus $g$ and degree $d$. If $\mathcal{C}$ is hyperelliptic, then it is always contained in a surface of minimal degree, and thus, by our totally real $\mathcal{K}_{p,1}$-theorem, we obtain 
$\qp(\mathcal{C}) = r - 2$ and $\py(\mathcal{C}) = 3$. Therefore, the first open case occurs when $\mathcal{C}$ is nonhyperelliptic and $g=3$. For this case, we obtain the following result.
\begin{theorem}\label{thm:nonhyperelliptic curve of genus 3}
Let $\mathcal{C}\subset \p^{r}$ $(r\geq 3)$ be a totally real linearly normal smooth nonhyperelliptic curve of genus $3$ embedded by a line bundle $\mathcal{L}$ on $\mathcal{C}$.
Then $\qp (\mathcal{C}) \in \{r-3,r-2\}$ and the following are equivalent:
\begin{enumerate}[\rm (i)]
    \item $\qp (\mathcal{C}) = r - 2$;
    \item $\py(\mathcal{C})=3$;
    \item $\mathcal{C}$ is contained in a surface of minimal degree;
    \item Either $\mathcal{L} = \omega_{C} ^2$  or else $\mathcal{L} = \omega_{C} ^{2} \otimes \mathcal{O}_{\mathcal{C}}(-p)$ for some $p \in \mathcal{C}$.
\end{enumerate}
\end{theorem}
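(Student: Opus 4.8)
The plan is to reduce the computation of $\qp(\mathcal{C})$ to a containment statement between two surfaces inside the Picard variety, and then to read everything off from the geometry of the theta divisor of $\mathcal{C}$. First I record the elementary but crucial fact that, since any special line bundle on a genus $3$ curve has at most $3$ sections, the hypothesis $r\geq 3$ forces $\mathcal{L}$ to be nonspecial, so that $d=\deg\mathcal{L}=r+3$. Projecting $\mathcal{C}$ from a set of $r-3$ general (hence linearly independent) points therefore produces a degree $6$ image in $\mathbb{P}^{3}$ whose hyperplane bundle is $\mathcal{M}=\mathcal{L}(-D)$ for an effective divisor $D$ of degree $r-3$ (the points impose independent conditions, so the image is cut out by the complete system $|\mathcal{M}|$, with $h^{0}(\mathcal{M})=4$). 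As $D$ varies, $\mathcal{M}$ ranges over the translate $\mathcal{L}-W_{r-3}$ of the Brill--Noether locus $W_{r-3}\subseteq\pic^{r-3}(\mathcal{C})$.

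The heart of the argument is the identification of the locus of ``bad'' bundles. I claim that the degree $6$ morphism attached to $\mathcal{M}\in\pic^{6}(\mathcal{C})$ has image lying on a quadric of $\mathbb{P}^{3}$ precisely when $\mathcal{M}\in\mathcal{B}:=\omega_{\mathcal{C}}^{2}-W_{2}$, that is, when $\omega_{\mathcal{C}}^{2}\otimes\mathcal{M}^{-1}$ is effective. One direction is the Veronese picture: if $\mathcal{M}=\omega_{\mathcal{C}}^{2}(-s_{1}-s_{2})$ then $\mathcal{C}$ embeds in the Veronese surface by $\omega_{\mathcal{C}}^{2}$ and its image in $\mathbb{P}^{3}$ lies on the degree $2$ projection of that surface. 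For the converse I use that an irreducible nondegenerate curve in $\mathbb{P}^{3}$ can only lie on a quadric of rank $\geq 3$; reading the two rulings (or the single ruling, for the cone) against $\mathcal{C}$ and invoking that every $g^{1}_{3}$ on a nonhyperelliptic genus $3$ curve equals $|\omega_{\mathcal{C}}-s|$, one realizes $\mathcal{M}$ as a sum of two trigonal pencils $\omega_{\mathcal{C}}^{2}(-s_{1}-s_{2})$. Combined with $\qp(\mathcal{C})\leq r-2$ (the curve is not of minimal degree) and $\qp(\mathcal{C})\geq r-3$ (a degree $7$ curve in $\mathbb{P}^{4}$ always lies on quadrics, since $h^{0}(\mathcal{O}_{\mathbb{P}^{4}}(2))=15>12$), this yields
\[
\qp(\mathcal{C})=r-2 \iff \mathcal{L}-W_{r-3}\subseteq \mathcal{B}=\omega_{\mathcal{C}}^{2}-W_{2}.
\]

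Now I analyze this containment by the size of $W_{r-3}$, and I expect this case distinction to be the main obstacle. When $r\geq 6$ one has $r-3\geq g=3$, so $W_{r-3}=\pic^{r-3}(\mathcal{C})$ and $\mathcal{L}-W_{r-3}$ is all of $\pic^{6}$, which cannot sit inside the proper subvariety $\mathcal{B}$; hence $\qp(\mathcal{C})=r-3$ and (i)--(iv) all fail (indeed $\deg\mathcal{L}\notin\{7,8\}$). When $r=4$ the bundle $\omega_{\mathcal{C}}^{2}\otimes\mathcal{L}^{-1}$ has degree $1$, so $\mathcal{L}=\omega_{\mathcal{C}}^{2}(-p)$ automatically and $\mathcal{L}-W_{1}\subseteq\omega_{\mathcal{C}}^{2}-W_{2}$ trivially, giving $\qp(\mathcal{C})=r-2$ for every such curve. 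The decisive case is $r=5$: here $\mathcal{L}-W_{2}$ and $\mathcal{B}$ are translates of the same irreducible surface $-W_{2}$ in the threefold $\pic^{6}$, so the containment forces equality of the translates, i.e. $(\omega_{\mathcal{C}}^{2}\otimes\mathcal{L}^{-1})+W_{2}=W_{2}$. Since for genus $3$ the surface $W_{2}=W_{g-1}$ is a theta divisor and hence defines a principal polarization, its translation stabilizer is trivial; therefore $\mathcal{L}=\omega_{\mathcal{C}}^{2}$. This establishes (i)$\Leftrightarrow$(iv).

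It remains to thread (ii) and (iii) into the equivalence, which is routine given the tools already developed. The implication (iv)$\Rightarrow$(iii) is the construction above: $\omega_{\mathcal{C}}^{2}$ places $\mathcal{C}$ on the Veronese surface, and $\omega_{\mathcal{C}}^{2}(-p)$ places it on the cubic scroll obtained by projecting the Veronese from one of its points, both surfaces of minimal degree. For (iii)$\Rightarrow$(ii) I invoke the totally real $\mathcal{K}_{p,1}$-theorem (\Cref{lem:totally_real_Kp1}) to ensure the containing surface of minimal degree is totally real, so that the results of \cite{MR4397034} give $\py(\mathcal{C})=\dim\mathcal{C}+2=3$; this is valid in our range $d=r+3\geq c+3$. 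Finally (ii)$\Rightarrow$(i) is immediate from \eqref{equ:py,qp}, which gives $\qp(\mathcal{C})\geq r-2$ and hence, with $\qp(\mathcal{C})\leq r-2$, equality. Closing the cycle (iv)$\Rightarrow$(iii)$\Rightarrow$(ii)$\Rightarrow$(i)$\Rightarrow$(iv) completes the proof.
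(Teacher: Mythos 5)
Your overall strategy is genuinely different from the paper's (which computes $\qp(\mathcal{C},\mathcal{L})$ degree by degree for $d=6,7,8,9$ and $d\geq 10$, using Homma's results and the relation $\qp(\mathcal{C},\mathcal{L})=\qp(\mathcal{C},\mathcal{L}\otimes\mathcal{O}_{\mathcal{C}}(-x))+1$ for general $x$), and your translation into the containment $\mathcal{L}-W_{r-3}\subseteq \omega_{\mathcal{C}}^{2}-W_{2}$ in $\pic^{6}(\mathcal{C})$ is an attractive uniform framework: the identification of the bad locus via the two rulings of a quadric, the case $r\geq 6$ via $W_{r-3}=\pic^{r-3}(\mathcal{C})$, and the case $r=5$ via triviality of the translation stabilizer of the theta divisor $W_{2}$ are all correct. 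However, there is a genuine error precisely in the case $r=4$: you write that ``$\omega_{\mathcal{C}}^{2}\otimes\mathcal{L}^{-1}$ has degree $1$, so $\mathcal{L}=\omega_{\mathcal{C}}^{2}(-p)$ automatically.'' A degree $1$ line bundle on a curve of genus $3$ is generically \emph{not} effective ($W_{1}$ is a curve inside the threefold $\pic^{1}(\mathcal{C})$), so this step is false, and with it your conclusion that $\qp(\mathcal{C})=r-2$ for \emph{every} linearly normal degree $7$ embedding. That conclusion contradicts the theorem you are proving (condition (iv) fails for general $\mathcal{L}$ of degree $7$) and contradicts the paper's own analysis: for $d=7$ the paper shows $\qp(\mathcal{C},\mathcal{L})=2$ if and only if $h^{0}(\mathcal{C},\mathcal{L}\otimes\omega_{\mathcal{C}}^{-1})=2$, equivalently $\mathcal{L}=\omega_{\mathcal{C}}^{2}\otimes\mathcal{O}_{\mathcal{C}}(-p)$, and otherwise $\qp(\mathcal{C},\mathcal{L})=1$, proved by checking that for general $x\in\mathcal{C}$ one has $\mathcal{L}\otimes\mathcal{O}_{\mathcal{C}}(-x)\neq \omega_{\mathcal{C}}\otimes\mathcal{O}_{\mathcal{C}}(y+z)$, so the inner projection from $x$ lies on no quadric.

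The gap is repairable inside your own framework, but it is not a triviality: at $r=4$ your criterion reads $\xi+W_{1}\subseteq W_{2}$ with $\xi=\omega_{\mathcal{C}}^{2}\otimes\mathcal{L}^{-1}\in\pic^{1}(\mathcal{C})$, and you must prove that this forces $\xi\in W_{1}$ (if $\xi\notin W_{1}$, then $h^{0}(\xi+p)=1$ for all $p$, and the constancy of $\xi+p-p$ together with nonhyperellipticity and Clifford's bound leads to a contradiction — this is exactly the theta-geometry analogue of the paper's $d=7$ computation). Two smaller omissions: you never treat $r=3$, where (iv) fails for degree reasons but your criterion requires observing that a very ample $\mathcal{L}$ of degree $6$ is never of the form $\omega_{\mathcal{C}}^{2}(-s_{1}-s_{2})=\omega_{\mathcal{C}}\otimes\mathcal{O}_{\mathcal{C}}(x+y)$ (the paper quotes \Cref{rem:Homma's work} for this); and your converse in the quadric-cone case is only gestured at — one should check that the ruling pencil of the cone cuts a $g^{1}_{k}$ with $2k+(\text{vertex contribution})=6$, so that gonality $3$ forces $k=3$ and no vertex contribution, giving $\mathcal{M}=\omega_{\mathcal{C}}^{2}(-2s)$. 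With these points supplied, your argument would constitute a valid and conceptually unified alternative to the paper's case-by-case proof; as written, the $r=4$ case is wrong.
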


This article is organized as follows.  
In \Cref{sec:Prelim}, we introduce essential background material related to our main objects of study.
In \Cref{sec:Codim3}, we establish explicit formulas for the quadratic persistence and the Pythagoras number of varieties of codimension three.
In \Cref{sec:mainthm}, we show the lifting problem for the case $d\geq 2c+3$. Using this together with the results in \Cref{sec:Codim3}, we prove the main theorem.
In \Cref{sec:MaxRegCurves} and \Cref{sec:nonhyper}, we investigate the quadratic persistence and the Pythagoras number of curves of maximal regularity and linearly normal smooth nonhyperelliptic curves of genus $3$, respectively.

\bigskip

\section*{Acknowledgement}
We would like to thank Grigoriy Blekherman for initiating this project and for engaging in useful discussions.
J.I. Han was partially supported by the National Research Foundation of Korea (NRF) grant funded by the Korea government (MSIT) (No. RS-2024-00352592) when he was at Korea Advanced Institute of Science and Technology (KAIST). J.I. Han is currently supported by a KIAS Individual Grant (MG101401) at Korea Institute for Advanced Study.
J. Jung was supported by the Institute for Basic Science (IBS-R032-D1-a00), and has been supported since March 2025 by the National Research Foundation of Korea (NRF) grant funded by the Korean government(MSIT) (RS-2024-00414849).
E. Park is supported by the National Research Foundation of Korea(NRF) grant funded by the Korea government(MSIT)(No. 2022R1A2C1002784). 

\bigskip

\section{Preliminaries}\label{sec:Prelim}
Throughout this article, we use the convention that varieties are integral. 
The varieties we consider are nondegenerate in most cases.
Let $X\subset\p^r_\c$ be a projective variety.
We denote by $I(X)$ its homogeneous ideal and denote by $S_X$ its homogeneous coordinate ring.

We recall the Betti numbers of a projective variety $X$ and some algebraic quantities that can be read from the Betti table of $X$.
\begin{notation and remark}\label{betti_remark}
\phantom{}
\begin{enumerate}[\rm (1)]
    \item The \textit{$(i,j)$-th graded Betti number of $X$} is $$\beta_{i,j}(X):=\dim\tor_i(S_X,\c)_{i+j}.$$
    \item The regularity $\reg(X)$ of $X$ is the smallest integer $m$ such that
    $$H^i(\p^{r},\mathcal{I}_X(m-i))=0 ~\text{for all}~ i\geq 1.$$
    By \cite{MR741934}, the regularity can be alternatively defined as
    $$\reg(X):=\min\{m\in \z\mid \beta_{i,j}(X)=0\text{ for all }i\geq 0\text{ and }j\geq m\}.$$
    \item\label{ell_def}  $\ell(X)$ is the length of the quadratic strand of the graded minimal free resolution of $X$, i.e.,
    $$\ell(X) := \min\{i\in\z_{\geq 1}\mid \beta_{i,1}(X)=0\}-1.$$
    By Green's $\mathcal{K}_{p,1}$-theorem in \cite{MR739785}, it holds that $0\leq \ell(X)\leq c$, and that $\ell(X)=c$ if and only if $d=c+1$, while $\ell(X)=c-1$ if and only if either $d=c+2$ or $d\geq c+3$ and $X$ is contained in a variety of minimal degree as a divisor (\textit{cf.} \cite[Theorem 1.1]{MR4816776}).
    \item  $a(X)$ is the Green--Lazarsfeld index of $X$, i.e.,
    $$a(X) := \max\{j\in\z_{\geq 0}\cup \{\infty\}\mid \beta_{i,2}(X)=0\text{ for all $i\leq j$} \}.$$ 
\end{enumerate}
\end{notation and remark}

In this paper, we consider two fields: $\r$ and $\c$. For the convenience, we use the following convention.

\begin{convention}
    If the field is not specified, then it is $\c$.
\end{convention}

Let $X\subset\p^r$ be a projective variety. In the following, we define totally real varieties and remark some basic properties of being totally real.

\begin{definition}
    $X$ is called \textit{totally real} if the set $X(\r)$ of real points in $X$ is Zariski dense in $X$.
\end{definition}

\begin{remark}\label{rem:tr}
    Let $\c[x_0,\cdots,x_r]$ be the polynomial ring of $\p^r_\c$.
    \begin{enumerate}[\rm (1)]
        \item\label{ideal_gen_by_real} If $X$ is totally real, then $I(X)$ is generated by real polynomials. 
        Indeed, if $X$ is totally real, for $f\in I(X)$, its complex conjugate $\overline{f}$ vanishes on $X(\r)$. 
        As $X(\r)\subset X$ is dense, it vanishes on the entire $X$.
        Hence $\overline{f}\in I(X)$ so that both $\re(f)$ and $\im(f)$ are contained in $I(X)$.
        \item When $X$ is totally real, we define $I_\r(X):=I(X)\cap \r[x_0,\cdots,x_r]$.
        Note that the extension of $I_\r(X)$ under $\r[x_0,\cdots,x_r]\to\c[x_0,\cdots,x_r]$ is $I(X)$ by (\ref{ideal_gen_by_real}). The prime ideal $I_\r(X)$ defines a geometrically integral $\r$-scheme $X_\r\subset\p^r_\r$ whose base change to $\c$ is $X=X_\r\times _{\spec\r} \spec\c$.
        \item If $X$ contains a smooth real point and $I(X)$ is generated by real polynomials, then $X$ is totally real (\textit{cf.} \cite{MR0683127}).
    \end{enumerate}
\end{remark}

On the other hand, the following lemma enables us to consider the general inner projections only when we deal with the quadratic persistence.

    \begin{lemma}\cite[Lemma 3.3]{MR4397034}\label{lem:qp}
    Let $X\subset \p_{\c}^r$ be a nondegenerate projective variety.
    Then the locus in $X^m$ on which $$(p_1,\dots,p_m)\mapsto \dim_\c I(\pi_{\{p_1,\cdots,p_m\}}(X))_2$$ achieves its minimum is Zariski open.
    \end{lemma}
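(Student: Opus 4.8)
The plan is to translate the geometric quantity $\dim_\c I(\pi_\Gamma(X))_2$ into the rank of a linear map that depends algebraically on the configuration $(p_1,\dots,p_m)$, and then invoke the lower semicontinuity of rank (equivalently, upper semicontinuity of nullity). The crux is the first translation; once it is in place, the openness is the standard determinantal argument.

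First I would set up a linear-algebraic description of the quadrics vanishing on an inner projection. For $Q\in I(X)_2$ write $M_Q$ for its symmetric $(r+1)\times(r+1)$ coefficient matrix, and put $\Gamma=\{p_1,\dots,p_m\}$, $\Lambda=\Span(\Gamma)$. The claim is that there is a natural identification
$$I(\pi_\Gamma(X))_2 \;\cong\; \{\, Q\in I(X)_2 \mid M_Q\,p_i = 0 \text{ for } i=1,\dots,m \,\}.$$
Indeed, after choosing coordinates so that $\Lambda=\Span(e_0,\dots,e_{m-1})$, the pullback $\pi_\Gamma^{*}q$ of a quadric $q$ on $\p^{r-m}$ is exactly a quadric on $\p^r$ involving only $x_m,\dots,x_r$, i.e.\ one whose matrix kills $e_0,\dots,e_{m-1}$; and since $X$ is nondegenerate, $X\setminus\Lambda$ is dense in $X$, so $q$ vanishes on $\pi_\Gamma(X)$ (the closure of the image) if and only if $\pi_\Gamma^{*}q$ vanishes on $X$. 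This gives the displayed isomorphism, and shows that the quantity depends only on $\Lambda$, so the formula makes sense on all of $X^m$.

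Next I would package this as a kernel. Fixing a basis $Q_1,\dots,Q_N$ of $I(X)_2$ with $N=\dim_\c I(X)_2$ and symmetric matrices $M_1,\dots,M_N$, I define for each configuration the linear map
$$\phi_{(p_1,\dots,p_m)} : I(X)_2 \longrightarrow (\c^{r+1})^{\oplus m}, \qquad Q \longmapsto (M_Q\,p_1,\dots,M_Q\,p_m).$$
By the identification above, $I(\pi_\Gamma(X))_2=\ker\phi_{(p_1,\dots,p_m)}$, so rank--nullity gives $\dim_\c I(\pi_\Gamma(X))_2 = N - \rank\phi_{(p_1,\dots,p_m)}$. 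In the chosen basis the $j$-th column of the matrix of $\phi$ is $(M_j p_1,\dots,M_j p_m)$, whose entries are linear in the homogeneous coordinates of each $p_i$; hence every entry is a regular, multihomogeneous function on $(\p^r)^m$, and scaling the $p_i$ leaves the rank unchanged, so $\rank\phi$ is a well-defined function on $(\p^r)^m$. Minimizing $\dim_\c I(\pi_\Gamma(X))_2$ is therefore the same as maximizing $\rank\phi$.

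Finally I would conclude by semicontinuity. For each integer $k$ the condition $\rank\phi\ge k$ is the non-vanishing of some $k\times k$ minor; each such minor is multihomogeneous in the coordinates of $p_1,\dots,p_m$ (its multidegree is recorded by how many of the chosen rows lie in each block), so its zero locus is a well-defined closed subset of $(\p^r)^m$ and $\{\rank\phi\ge k\}$ is Zariski open there, hence open in $X^m$. Taking $k=k_0$ to be the maximal rank attained on $X^m$ (finite, and attained since rank takes finitely many integer values), the locus $\{\rank\phi=k_0\}=\{\rank\phi\ge k_0\}$ is Zariski open, and by the displayed formula this is precisely the locus where $\dim_\c I(\pi_\Gamma(X))_2$ attains its minimum $N-k_0$. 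The only step requiring care is the verification that vanishing on the projection is equivalent to the matrix equations $M_Q p_i=0$ and that the resulting entries are genuinely regular functions of the points; after that, everything reduces to the classical fact that maximal-rank loci of a matrix with polynomial entries are open.
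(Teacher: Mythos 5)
Your proof is correct and complete: the identification of $I(\pi_\Gamma(X))_2$ with $\{Q\in I(X)_2 \mid M_Q p_i=0 \text{ for all } i\}$ is valid and coordinate-free (quadrics pulled back under projection from $\Lambda=\Span(\Gamma)$ are exactly the quadric cones whose vertex contains $\Lambda$), and the rank--nullity reduction together with lower semicontinuity of rank for a matrix with multihomogeneous entries gives the openness, with the degenerate tuples handled correctly by your observation that the kernel description depends only on the span. The paper itself gives no proof but cites \cite[Lemma 3.3]{MR4397034}, whose argument runs along the same lines, so your route is essentially the same as the source's.
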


    We collect the results in \cite{MR4397034} for totally real varieties that are necessary for our discussions as follows. Note that some results enumerated here are shown without assuming the irreducibility in \cite{MR4397034}.
    \begin{theorem}\cite{MR4397034}\label{thm:qpmain}
        Let $X\subset\p^r$ be a nondegenerate totally real variety.
        Then the following hold.
        \begin{enumerate}[\rm (1)]
            \item Let $\widetilde{X}$ be a totally real variety in $\mathbb{P}^r$ that contains $X$. Then 
                \begin{equation*}\py(X) \leq \py (\widetilde{X}).
                \end{equation*}
                In particular, if $\widetilde{X}$ is a variety of minimal degree, then \begin{equation*}\py(X) \leq \dim (\widetilde{X}) + 1.
                \end{equation*}
            \item $r+1-\qp(X)\le \py(X)$.
            \item $\qp(X)\geq \qp(X')$ if $X'$ is a variety containing $X$.
            \item $\qp(X)\geq \ell(X)$.
        \end{enumerate}
    \end{theorem}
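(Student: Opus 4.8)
Since all four items are recorded in \cite{MR4397034}, my plan is to reprove each by the natural functorial/monotonicity argument, isolating the one genuinely hard inequality, (2), which is the bridge between the semialgebraic invariant $\py$ and the projective invariant $\qp$.

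\emph{Parts (3) and (1): monotonicity.} For (3), suppose $X\subseteq X'$, so $I(X')\subseteq I(X)$ and hence $I(X')_2\subseteq I(X)_2$. I would take a set $\Gamma\subseteq X$ of $\qp(X)$ linearly independent points realizing the quadratic persistence of $X$, i.e.\ with $I(\pi_\Gamma(X))_2=0$. Since $\Gamma\subseteq X\subseteq X'$ and $\pi_\Gamma(X)\subseteq\pi_\Gamma(X')$, we get $I(\pi_\Gamma(X'))_2\subseteq I(\pi_\Gamma(X))_2=0$; thus the same $\Gamma$ witnesses $\qp(X')\le|\Gamma|=\qp(X)$. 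For (1), the inclusion $X\subseteq\widetilde X$ gives $I_\r(\widetilde X)\subseteq I_\r(X)$ and hence a surjection of graded $\r$-algebras $R_{\widetilde X}\twoheadrightarrow R_X$ that is the identity on degree-one parts (both varieties being nondegenerate in $\p^r$, so each degree-one part is $\r^{r+1}$). Given a sum of squares in $R_X$, I would lift its linear summands to linear forms of $R_{\widetilde X}$, rewrite the resulting sum of squares in $R_{\widetilde X}$ with at most $\py(\widetilde X)$ squares, and push back along the surjection, which sends squares to squares; hence $\py(X)\le\py(\widetilde X)$. For the ``in particular'', when $\widetilde X$ is a totally real variety of minimal degree one has $\py(\widetilde X)=\dim\widetilde X+1$ --- the bound $\le\dim\widetilde X+1$ can be read off the classification of minimal-degree varieties (quadrics, rational normal scrolls, the Veronese surface) by exhibiting explicit length-$(\dim+1)$ representations, or quoted from \Cref{thm:BSSV1} --- and combining this with the displayed inequality gives $\py(X)\le\dim\widetilde X+1$.

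\emph{Part (4): $\qp(X)\ge\ell(X)$ by iterated inner projection.} The plan is to project one general point of $X$ at a time and track the length of the quadratic (linear) strand. The key input is the behaviour of linear syzygies under a general inner projection, $\beta_{p,1}(\pi_p(X))\ge\beta_{p+1,1}(X)$, which forces $\ell(\pi_p(X))\ge\ell(X)-1$; that is, the strand length drops by at most one. By \Cref{lem:qp} a general choice of $\qp(X)$ points of $X$ already annihilates all quadrics, so projecting from them one at a time yields a variety with $\ell=0$ after exactly $\qp(X)$ steps. Since each step lowers $\ell$ by at most one and we start at $\ell(X)$, we conclude $\ell(X)\le\qp(X)$.

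\emph{Part (2): $r+1-\qp(X)\le\py(X)$, the crux.} I would reformulate $\py$ through Gram matrices: writing $V=(R_X)_1\cong\r^{r+1}$ and $\mathcal Q=I_\r(X)_2\subseteq\operatorname{Sym}(V)$ for the symmetric matrices representing ideal quadrics, a class $q\in(R_X)_2$ is a sum of squares exactly when it admits a positive semidefinite Gram matrix, and the least number of squares equals the least rank of such a matrix; hence $\py(X)=\max_q\min\{\rank G:G\succeq0,\ G\text{ a Gram matrix of }q\}$. To bound this below I would fix a \emph{generic} positive definite $q$ with Gram matrix $G_0\succ0$; its competing Gram matrices are $G_0-M$ with $M\in\mathcal Q$ and $G_0-M\succeq0$, so the number of squares it requires is $r+1-s$, where $s$ is the largest corank attainable by staying in the positive semidefinite cone along the affine space $\{G_0-M:M\in\mathcal Q\}$. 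It then suffices to prove $s\le\qp(X)$, after which $\py(X)\ge r+1-s\ge r+1-\qp(X)$. The step I expect to be the main obstacle is exactly this inequality $s\le\qp(X)$: one must translate a rank drop of a positive semidefinite Gram matrix into a configuration of points of $X$ whose inner projection removes the quadrics through $X$. This is where total reality is indispensable --- the relevant extremal data live in the real cone generated by the rank-one matrices $\{pp^{T}:p\in X(\r)\}$, and Zariski density of $X(\r)$ (\Cref{rem:tr}) is what lets one realize the rank drop by genuine real points of $X$ and so bound $s$ by $\qp(X)$. A naive choice such as $q=\sum_i x_i^2$ is insufficient (it can collapse to very few squares on $X$), so the genericity of $q$ together with a transversality argument controlling the corank locus inside $\{G_0-M\}$ is essential and constitutes the technical core.
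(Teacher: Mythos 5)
The paper itself offers no proof of this statement: Theorem 2.6 is quoted verbatim from \cite{MR4397034}, so your proposal has to be measured against the arguments in that reference. Your parts (1) and (3) are correct and are the standard arguments there: the graded surjection $R_{\widetilde X}\twoheadrightarrow R_X$, which is an isomorphism in degree one by nondegeneracy, lets you lift a sum of squares, shorten it upstairs, and push it back down; and a witnessing set $\Gamma\subseteq X\subseteq X'$ together with $I(\pi_\Gamma(X'))_2\subseteq I(\pi_\Gamma(X))_2=0$ gives the monotonicity of $\qp$, with the ``in particular'' supplied by \Cref{thm:BSSV1}. Part (4) also follows the route of \cite{MR4397034}, provided the persistence of the linear strand under general inner projection is treated as a quotable input; note you only need the nonvanishing statement ($\beta_{p+1,1}(X)\neq 0$ implies $\beta_{p,1}(X_q)\neq 0$ for general $q\in X$), which that paper obtains via partial elimination ideals, together with the observation that images of generic points of $X$ are generic points of the successive projections, so the one-step lemma can be iterated $\qp(X)$ times.

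Part (2), however, which you correctly identify as the crux, is not proved: you reduce it to the inequality $s\le\qp(X)$ for a generic positive definite $q$ and then explicitly leave that inequality open. The reduction is also shakier than you suggest: the kernel of a low-rank psd Gram matrix $G_0-M$ is merely a linear subspace of the dual space, with no a priori connection to points of $X(\r)$, and nothing forces a generic \emph{interior} point of the sums-of-squares cone to be extremal for $\py$, so it is unclear that a transversality argument along the slice $\{G_0-M\}$ can close the gap. The argument of \cite{MR4397034} instead works on a boundary face of the cone: choose $k=\qp(X)$ generic \emph{real} points $\Gamma=\{p_1,\dots,p_k\}\subset X(\r)$ with $I(\pi_\Gamma(X))_2=0$, which is possible by \Cref{lem:qp} together with Zariski density of $X(\r)$ --- this is the precise place where total reality enters; let $W\subset (R_X)_1$ be the $(r+1-k)$-dimensional space of linear forms vanishing on $\Gamma$, and set $\sigma=\sum_j m_j^2$ for a basis $\{m_j\}$ of $W$. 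If $\sigma=\sum_{i=1}^t\ell_i^2$ in $R_X$, then evaluating at each $p_j$ (legitimate, since elements of $I_\r(X)$ vanish there) gives $\sum_i\ell_i(p_j)^2=0$, hence every $\ell_i$ lies in $W$; consequently $\sum_j m_j^2-\sum_i\ell_i^2\in I_\r(X)_2\cap\operatorname{Sym}^2 W=I_\r(\pi_\Gamma(X))_2=0$, so the representation is an identity of polynomials in $r+1-k$ variables, and positive definiteness of $\sigma$ there forces $t\ge r+1-k$. This yields $\py(X)\ge r+1-\qp(X)$ directly; until you either prove your corank claim $s\le\qp(X)$ or adopt this facial argument, your part (2) remains a plan rather than a proof.
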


\begin{remark}\label{rem:codim_two}
	The quadratic persistence and the Pythagoras number of totally real projective varieties of codimension 2 are completely determined by \Cref{thm:BSSV1} and \Cref{thm:BSSV2}. Indeed, in this case, there are only four possibilities:
	\begin{enumerate}
		\item $X$ is a variety of minimal degree;
		\item $X$ is an aCM variety with $d=c+2$;
		\item $X$ is contained in a unique hyperquadric;
		\item $X$ is not contained in any hyperquadric.
	\end{enumerate}
	Cases (1) and (2) are resolved by \Cref{thm:BSSV1} and \Cref{thm:BSSV2}, and case (4) is trivial. In case (3), one can easily see $\qp(X)=1$ and $\py(X)=n+2$ from \Cref{thm:qpmain}.
\end{remark}

Hence the next open case is the case of codimension three.
We consider the codimension three varieties in the next section.

\begin{example}
    Let $\mathcal{C}'\subset\p^2$ be a totally real plane curve of degree at least $7$ and $\mathcal{C} = \nu_3(\mathcal{C}')\subset\p^9$ be the curve contained in a Veronese variety $\nu_3(\p^2)\subset \p^9$. 
    Then $I(\mathcal{C})_2=I(\nu_3(\p^2))_2$.
    Also, $$\qp(\mathcal{C}) = \qp(\nu_3(\p^2)) = 6 \text{ and } \py(\mathcal{C}) = \py(\nu_3(\p^2)) = 4.$$ 
    Indeed, $\qp(\nu_3(\p^2)) = 6 \le \qp(\mathcal{C}) \le \frac{-1+\sqrt{217}}{2}=6.87...$ since $\binom{\qp(\mathcal{C})+1}{2}\le 27 = \dim_\c I(\mathcal{C})_2$ (\textit{cf.} \cite[Corollary 3.7]{MR4397034}).
    Applying the upper bounds of the Pythagoras number in \cite[Theorem 1.1]{MR4397034}, we get the following upper bounds of $\py(\mathcal{C})$:
    \begin{enumerate}[\rm (i)]
        \item $\py(\mathcal{C}) < 7$ since $\binom{\py(\mathcal{C})+1}{2} < \dim_{\c}(S_{\mathcal{C}})_2 = 28$;
        \item $\py(\mathcal{C}) \le 10=\dim\p^9+1-\min\{a(\mathcal{C}),\codim(\mathcal{C})\}$;
        \item $\py(\mathcal{C}) \le 5 \le \min_{\widetilde{X}\in A}\{1+\dim \widetilde{X}\}$ where $A$ denotes the set of totally real varieties of minimal degree containing $\mathcal{C}$.
        Indeed, $\mathcal{C}$ is not contained in any threefold variety of minimal degree since $I(\mathcal{C})_2=I(\nu_3(\p^2))_2$ and $\nu_3(\p^2)$ is not contained in any threefold variety of minimal degree, see \cite[p. 151]{MR739785}. 
    \end{enumerate}
\end{example}

\begin{remark}
Recent developments following \cite{MR4397034} include applications to nonconvex optimization \cite{blekherman2024spurious} and new results on the Pythagoras numbers of ternary forms \cite{blekherman2024pythagoras}. 
\cite{MR4397034} also provides theoretical support for computational approaches to bounding Pythagoras numbers as shown in \cite{arevalo2022minimizacion}.
An overview of recent progress and related problems is provided in the survey \cite{MR4249429}.
\end{remark}

\bigskip

\section{Codimension three varieties}\label{sec:Codim3}
Let $X \subset \mathbb{P}^r$ be a nondegenerate projective variety of codimension three and degree $d$. Then 
\begin{equation*}0 \leq \ell (X) \leq 3.
\end{equation*}
Furthermore, $\ell (X) = 3$ if and only if $d = 4$, and $\ell (X) = 2$ if and only if either $d=5$ or else $d \geq 6$ and $X$ is contained in a variety of minimal degree as a divisor (\textit{cf.} \Cref{{betti_remark}}.\eqref{ell_def}).
The main purpose of this section is to prove \Cref{mainthm:codim3} below, which shows that both $\qp(X)$ and $\py(X)$ are completely determined by $\ell (X)$. 

\begin{theorem}\label{mainthm:codim3}
Let $X \subset \p^{r}$ be a nondegenerate variety of codimension $3$ and degree $d \geq 6$.
Then $$\ell(X) = \qp(X).$$ 
In particular, $\qp(X) = 2$ if and only if $X$ is a divisor of a variety of minimal degree.
Consequently, if $X$ is a totally real variety of codimension at most three, then $$\py(X) = r + 1 - \qp(X).$$
\end{theorem}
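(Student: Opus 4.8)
The plan is to prove the identity $\ell(X) = \qp(X)$ for codimension three varieties of degree $d \geq 6$ by establishing the two inequalities separately, then deduce the Pythagoras number formula by invoking the earlier results for totally real varieties. Since we already know from \Cref{thm:qpmain}.(4) that $\qp(X) \geq \ell(X)$ holds in general, the substantive content is the reverse inequality $\qp(X) \leq \ell(X)$, together with the geometric characterization of when $\qp(X) = 2$.

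First I would analyze the possible values. By the discussion recalled at the start of this section, for $d \geq 6$ we have $\ell(X) \in \{0,1,2\}$, with $\ell(X) = 2$ precisely when $X$ is a divisor in a variety of minimal degree. So I must show $\qp(X) \leq \ell(X)$ case by case, and the inequality from \Cref{thm:qpmain}.(4) pins everything down once the upper bound matches. To control $\qp(X)$ from above, I would use the definition via inner projections together with \Cref{lem:qp}, which lets me compute the generic behavior: $\qp(X) \leq k$ means that projecting from $k$ general points of $X$ kills all quadrics. The key technical tool is to relate the length of the quadratic strand of $X$ to that of an inner projection $\pi_p(X)$: a standard mapping-cone / linkage argument for inner projections shows that projecting from a general point decreases $\ell$ by exactly one as long as $\ell$ is positive (and the codimension drops by one under inner projection). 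Iterating this $\ell(X)$ times would bring the quadratic strand length down to zero, and I would argue that once $\ell = 0$ the projected variety—now of codimension $3 - \qp$—carries no quadrics through it, giving $\qp(X) \leq \ell(X)$.

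The geometric characterization of $\qp(X) = 2$ then follows immediately: $\qp(X) = 2 = \ell(X)$ forces $\ell(X) = 2$, which by the recalled form of Green's $\mathcal{K}_{p,1}$-theorem (and \cite[Theorem 1.1]{MR4816776}) is equivalent to $X$ being a divisor in a variety of minimal degree, since $d \geq 6$ excludes the $d = 5$ alternative. For the final Pythagoras statement, I would combine $\qp(X) = \ell(X)$ with inequality \eqref{equ:py,qp} (restated as \Cref{thm:qpmain}.(2)), which gives $\py(X) \geq r + 1 - \qp(X)$, and then supply a matching upper bound. When $\qp(X) = \ell(X) = 2$, the variety $X$ sits as a divisor in a variety of minimal degree $\widetilde{X}$ of dimension $n+1$; provided $\widetilde{X}$ can be taken totally real, \Cref{thm:qpmain}.(1) yields $\py(X) \leq \dim(\widetilde{X}) + 1 = n + 2 = r + 1 - \qp(X)$. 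For codimension at most two the claim is covered by \Cref{rem:codim_two}, so only the codimension three case needs this argument.

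The main obstacle I anticipate is twofold. First, the inner-projection step—showing that a general projection reduces $\ell(X)$ by exactly one rather than by more or by zero—requires care: one must verify that the general point $p \in X$ is positioned so that no unexpected quadrics survive and none are created, which is precisely where the hypothesis $d \geq 6$ (ensuring enough room and ruling out degenerate low-degree behavior) does its work. Second, and more delicately for the Pythagoras conclusion, is guaranteeing that the containing variety of minimal degree is \emph{totally real}, so that \Cref{thm:qpmain}.(1) applies; this is exactly the role of the totally real $\mathcal{K}_{p,1}$-theorem advertised in the introduction (\Cref{lem:totally_real_Kp1}), which I would invoke to upgrade the abstract containment to a containment in a totally real variety of minimal degree. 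Without that input the upper bound on $\py(X)$ would not be available, so I expect the proof to route the Pythagoras half of the statement explicitly through that lemma.
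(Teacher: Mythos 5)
There is a genuine gap, and it sits exactly at the step you flag as ``standard.'' Your whole reduction rests on the claim that a general inner projection decreases $\ell$ by exactly one whenever $\ell>0$. Mapping-cone/elimination techniques for inner projections give (at best) the persistence direction $\ell(\pi_p(X))\geq \ell(X)-1$; what your argument actually needs is the opposite, vanishing direction: if $\ell(X)=1$, i.e.\ $\beta_{2,1}(X)=0$, then $I(\pi_p(X))_2=0$ for general $p\in X$ --- equivalently, no quadric in $I(X)_2$ is singular at a general point of $X$. No formal mapping-cone or linkage argument yields this, and it is precisely the hard core of the theorem. The paper isolates it as \Cref{prop:qp(X) bigger than 2} and proves it geometrically: if $\qp(X)\geq 2$, then either some pencil of quadrics through $X$ has reducible base locus, in which case Bezout forces $X$ into a cubic variety of minimal degree and hence $\beta_{2,1}(X)>0$, or every such pencil cuts out an irreducible complete intersection $Y$ of two quadrics, in which case \Cref{lem:degeneracy singular locus} (every component of $\Sing(Y)$ is degenerate, proved from the Jacobian matrix) shows that a general $p\in X$ is a smooth point of $Y$, so no quadric survives the projection and $\qp(X)=1$, a contradiction. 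A decisive sanity check that your lemma cannot be standard: iterating it in arbitrary codimension would give $\qp(X)\leq\ell(X)$, hence $\qp(X)=\ell(X)$, for every nondegenerate variety; that would settle the implication $\rm{(i)}\Rightarrow\rm{(iii)}$ of \Cref{mainthm} for all $d\geq c+3$ and remove the aCM hypothesis from \Cref{thm:BSSV2} --- exactly what the paper declares open and the reason it introduces the Lifting Problem with the restriction $d\geq 2c+3$. Note also that in codimension three the case $\ell(X)=2$ needs none of this: $\qp(X)\leq 2$ already follows from \Cref{thm:BSSV1} since $d\geq 6>c+1$, and $\qp(X)\geq\ell(X)$ finishes it; the entire weight of the theorem falls on the case $\ell(X)\leq 1$, which your proposal does not actually prove.

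Your treatment of the Pythagoras statement is also incomplete. The final claim covers all totally real $X$ of codimension at most three, hence also the cases $\qp(X)=1$ and $\qp(X)=0$, but you only supply the matching upper bound when $\qp(X)=2$ (there your routing through \Cref{lem:totally_real_Kp1} and \Cref{thm:qpmain}(1) agrees with the paper). For $\qp(X)=1$ the paper takes the quadric hypersurface $Q\supset X$, chooses it with real coefficients (possible since $I(X)$ is generated by real forms), and argues $Q$ is totally real because $\Sing(Q)$ is degenerate while $Q$ contains the nondegenerate set $X(\r)$, giving $\py(X)\leq\py(Q)=r$; for $\qp(X)=0$ one has $\py(X)\leq\py(\p^r)=r+1$ trivially. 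These cases are short but cannot be omitted. Finally, a minor citation point: you invoke \Cref{thm:qpmain}(4) for $\qp(X)\geq\ell(X)$, which the paper states for totally real varieties, whereas the equality $\ell(X)=\qp(X)$ in \Cref{mainthm:codim3} is asserted for arbitrary complex varieties; the inequality does hold without any reality assumption in \cite{MR4397034}, and it should be cited in that form.
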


Concerning the proof of \Cref{mainthm:codim3}, recall that if $\ell (X) = 2$ and $d \geq 6$, then $X$ is contained in a variety $Y$ of minimal degree as a divisor. In this case, if $Y$ is a totally real variety, then by Theorem 2.6 we obtain  
\begin{equation*}
\py(X) \leq \dim(X)+2 ~ \mbox{and} ~ \qp(X) \geq 2.
\end{equation*}
For this reason, ensuring that such a variety $Y$ is totally real is a crucial issue.
We address this by establishing a totally real version of the $\mathcal{K}_{p,1}$-theorem as follows.

\begin{theorem}[totally real $\mathcal{K}_{p,1}$-theorem]\label{lem:totally_real_Kp1}
    Let $X\subset\p^r_\c$ be a nondegenerate totally real variety of codimension $c$ and degree $d$. 
    Suppose either
    \begin{enumerate}[\rm (1)]
        \item\label{lem:totally_real_Kp1_1} $d\geq c+3$ and $\beta_{c-1,1}(X)\neq 0$ or
        \item\label{lem:totally_real_Kp1_2} $d=c+2$ and $X$ is a non-aCM variety that is not (a cone over) the isomorphic projection of the Veronese surface in $\p^5$.
    \end{enumerate}
    Then there exists a totally real variety of minimal degree containing $X$ as a divisor.
\end{theorem}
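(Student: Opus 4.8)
The plan is to establish the totally real $\mathcal{K}_{p,1}$-theorem by combining the classical (complex) $\mathcal{K}_{p,1}$-theorem with a Galois-descent argument that transfers the real structure of $X$ to the variety of minimal degree $Y$ that it produces. The starting point is that in both cases \eqref{lem:totally_real_Kp1_1} and \eqref{lem:totally_real_Kp1_2}, the classical theory guarantees the existence of a variety $Y$ of minimal degree in $\p^r_\c$ containing $X$ as a divisor. For case \eqref{lem:totally_real_Kp1_1} this is Green's $\mathcal{K}_{p,1}$-theorem together with the characterization in \Cref{betti_remark}.\eqref{ell_def} (since $\beta_{c-1,1}(X)\neq 0$ forces $\ell(X)=c-1$, the extremal length corresponding to the divisor-of-minimal-degree geometry); for case \eqref{lem:totally_real_Kp1_2} it is the classification of varieties of almost minimal degree in \cite{MR2274517}, which yields an $(n+1)$-dimensional $Y$ of minimal degree except precisely in the Veronese-projection situation that has been excluded by hypothesis.

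\textbf{Descent via complex conjugation.} Since $X$ is totally real, \Cref{rem:tr}.\eqref{ideal_gen_by_real} tells us that $I(X)$ is stable under the complex-conjugation action $\sigma$ on $\c[x_0,\dots,x_r]$, i.e.\ $\sigma(I(X))=I(X)$, equivalently $\sigma(X)=X$ as a subscheme of $\p^r_\c$. The key observation is that the variety $Y$ of minimal degree is \emph{canonically} determined by $X$: in case \eqref{lem:totally_real_Kp1_1} the ideal of $Y$ is generated by the quadrics in $I(X)$ (the minimal-degree variety cut out by $I(X)_2$), and in case \eqref{lem:totally_real_Kp1_2} the uniqueness statements for the ambient variety of minimal degree play the analogous role. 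Because $\sigma$ fixes $X$ and acts $\c$-semilinearly on $I(X)_2$, it must carry $Y$ to a variety $\sigma(Y)$ enjoying the identical extremal property, and the uniqueness of that property forces $\sigma(Y)=Y$. Hence $I(Y)$ is $\sigma$-stable, so by \Cref{rem:tr}.\eqref{ideal_gen_by_real}$Y$ is defined over $\r$.

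\textbf{From defined-over-$\r$ to totally real.} Having $I(Y)$ generated by real polynomials is not yet the same as $Y$ being totally real; one must rule out the pathology where $Y(\r)$ fails to be Zariski dense (for instance an anisotropic real form with no real points). Here I would invoke \Cref{rem:tr}.\eqref{ideal_gen_by_real}: since $X\subset Y$, $X$ is totally real, and $X(\r)$ is Zariski dense in $X$, the real points of $X$ are real points of $Y$ sitting on a divisor; choosing a smooth real point of $X$ that is also smooth on $Y$ (available because $X$ is a Cartier divisor on the minimal-degree, hence normal, variety $Y$ and the real locus is dense) exhibits a smooth real point of $Y$. By \Cref{rem:tr}.\eqref{ideal_gen_by_real}, a variety whose ideal is generated by real polynomials and which contains a smooth real point is totally real. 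This completes the argument.

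\textbf{The main obstacle} I expect is precisely the uniqueness/canonicity step that makes $\sigma(Y)=Y$: one must argue that the minimal-degree variety produced by the $\mathcal{K}_{p,1}$ machinery is intrinsic to $X$ rather than one of several choices. In case \eqref{lem:totally_real_Kp1_1} this is clean because $Y$ is literally the zero locus of $I(X)_2$ (the quadratic strand being extremal pins down $Y$ as $V(I(X)_2)$), so $\sigma$-invariance of $I(X)_2$ is immediate. The genuinely delicate case is \eqref{lem:totally_real_Kp1_2}, where the excluded Veronese projection is exactly the configuration in which the ambient variety of minimal degree might not be unique; the hypothesis is tailored to restore uniqueness, and verifying that the classification in \cite{MR2274517} yields a $\sigma$-stable $Y$ outside that locus is where the real work lies.
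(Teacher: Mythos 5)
Your overall architecture (existence of $Y$ from the complex theory, then transfer of the real structure to $Y$, then a smooth real point to upgrade ``defined over $\r$'' to ``totally real'') matches the paper's, but the step you yourself identify as the crux --- the canonicity of $Y$ --- is exactly where your argument breaks, and in case (1) your proposed justification is false. You claim $Y=V(I(X)_2)$, i.e.\ that $I(Y)$ is cut out by the quadrics through $X$. One only knows $I(Y)_2\subseteq I(X)_2$, and equality requires Castelnuovo's bound $\dim I(X)_2\le\binom{c}{2}$, which holds for $d\geq 2c+1$ but not in the whole range $d\geq c+3$. Concretely, take a smooth rational curve $X\in |H+3F|$ on a surface scroll $S\subset\p^r$ of minimal degree, $r\geq 6$: then $d=r+2=c+3$ and $\beta_{c-1,1}(X)\neq 0$, but $h^0(\i_X(2))\geq \binom{r+2}{2}-(2r+5)=\binom{c}{2}+(r-5)>\binom{c}{2}=\dim I(S)_2$, so $V(I(X)_2)\subsetneq S$ and is not a variety of minimal degree at all. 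Hence $\sigma(Y)=Y$ does not follow as written; and in case (2) you explicitly defer the uniqueness verification (``where the real work lies''), so the proposal is incomplete precisely at its load-bearing step. A smaller but real slip: a smooth real point of $X$ need not be smooth on $Y$ (it may lie in the vertex of the cone $Y$, where $Y$ is singular no matter how nice $X$ is there); the correct argument is that $\Sing(Y)$ is \emph{degenerate} while $X(\r)$ is nondegenerate, so some real point of $X$ avoids $\Sing(Y)$.

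The paper avoids any a priori uniqueness or Galois-descent argument by working with Koszul cohomology over $\r$. Since Betti numbers are unchanged under the flat extension $\r\to\c$, the hypothesis $\beta_{c-1,1}(X)\neq 0$ gives a nonzero \emph{real} class $\gamma\in K_{c-2,2}(I_\r(X),V_\r)$, whose image $i(\gamma)$ lies in $K_{c-2,2}(I(X),V_\c)$. The key numerical input --- in both cases $\beta_{c-1,1}(X)=c-1=\beta_{c-1,1}(Y)$ --- combined with the inclusion $K_{c-2,2}(I(Y),V_\c)\subseteq K_{c-2,2}(I(X),V_\c)$ forces these Koszul groups to be \emph{equal}, so $i(\gamma)$ is a class for $Y$ and its syzygy ideal is exactly $I(Y)$. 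Since the syzygy ideal of $i(\gamma)$ is the complexification of the syzygy ideal of the real class $\gamma$, this exhibits $I(Y)$ as generated by real polynomials directly; as a by-product it also yields the uniqueness of $Y$ that your descent would have needed. If you want to rescue your route, you must prove this Betti-number equality anyway, at which point the paper's direct argument is shorter; your descent-by-uniqueness reasoning is sound in outline only once that equality (or some other canonical description of $Y$ valid for all $d\geq c+3$ and in the almost-minimal-degree case) is supplied.
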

\begin{proof}
    First, note that in case \eqref{lem:totally_real_Kp1_1}, $X$ is a divisor of a variety of minimal degree by Green's $\mathcal{K}_{p,1}$-theorem (\textit{cf.} \Cref{betti_remark}\eqref{ell_def}).
    In case \eqref{lem:totally_real_Kp1_2}, the same holds by \cite{MR2274517}.
    Let $Y\subset\p^r$ be a variety of minimal degree containing $X$ as a divisor.
    We claim that $Y$ is totally real.
    As $X$ is totally real, there is an associated ideal $I_\r(X)\subset\r[x_0,\cdots,x_r]$ (\textit{cf.} \Cref{rem:tr}).

    Indeed, let $V_\r=\r\cdot x_0+\cdots+\r\cdot x_r$ and $V_\c=\c\cdot x_0+\cdots+\c\cdot x_r$.
    Thus $V_\r\subset V_\c$.
    Then $\beta_{c-1,1}(X) \not= 0$ in both cases \eqref{lem:totally_real_Kp1_1} and \eqref{lem:totally_real_Kp1_2}, and hence the Koszul cohomology group $K_{c-2,2}(I_\r(X),V_\r)$ is nonzero. 
    Now, pick a nonzero element $\gamma$ of $K_{c-2,2}(I_\r(X),V_\r)$.
    Thus it is contained in the kernel of
    \[
        \partial_\r:\wedge^{c-2}V_\r\otimes I_\r(X)_2\to \wedge^{c-3}V_\r\otimes I_\r(X)_3.
    \]
    From the commutative diagram
    \[\begin{tikzcd}
        \wedge^{c-2}V_\r\otimes I_\r(X)_2\arrow[r,"\partial_\r"]\arrow[d,"i"] &\wedge^{c-3}V_\r\otimes I_\r(X)_3\arrow[d,"j"]\\
        \wedge^{c-2}V_\c\otimes I(X)_2\arrow[r,"\partial_\c"] &\wedge^{c-3}V_\c\otimes I(X)_3
    \end{tikzcd}\]
    of $\r$-vector spaces, one can see that $i(\gamma)\in \ker \partial_\c=K_{c-2,2}(I(X),V_\c)$.
    For both cases \eqref{lem:totally_real_Kp1_1} and \eqref{lem:totally_real_Kp1_2}, we have $\beta_{c-1,1}(X)=c-1$.
    Hence $\beta_{c-1,1}(X)=\beta_{c-1,1}(Y)$, so $$K_{c-2,2}(I(X),V_\c)=K_{c-2,2}(I(Y),V_\c).$$
    Thus $i(\gamma)\in K_{c-2,2}(I(Y),V_\c)$, so the syzygy ideal of $i(\gamma)$ is $I(Y)$.
    By construction, the syzygy ideal of $i(\gamma)$ is the extension of the syzygy ideal of $\gamma\in K_{c-2,2}(I_\r(X),V_\r)$ under the map $\r[x_0,\cdots,x_r]\to \c[x_0,\cdots,x_r]$.
    This shows $I(Y)$ can be generated by real polynomials.
    As the singular locus of $Y$ is degenerate and $Y$ contains $X(\r)$, which is nondegenerate, $Y$ contains a smooth real point and is totally real.
\end{proof}

Next, in order to investigate the relationship between $\ell (X)$ and $\qp(X)$ mentioned in \Cref{mainthm:codim3}, we first prove the following proposition.

\begin{proposition}\label{prop:qp(X) bigger than 2}
Let $X \subset \p^r$ be a nondegenerate projective variety of codimension $\geq 3$. Then $\qp (X) \geq 2$ if and only if $\beta_{2,1} (X) > 0$.
\end{proposition}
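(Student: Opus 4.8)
The plan is to recast the statement as the equivalence $\qp(X)\ge 2 \iff \ell(X)\ge 2$. Since $\beta_{2,1}(X)>0$ forces $\beta_{1,1}(X)>0$, the definition of $\ell(X)$ in \Cref{betti_remark}\eqref{ell_def} gives $\beta_{2,1}(X)>0\iff \ell(X)\ge 2$. With this reformulation one implication is free: if $\beta_{2,1}(X)>0$ then $\ell(X)\ge 2$, and \Cref{thm:qpmain}(4) yields $\qp(X)\ge \ell(X)\ge 2$. So the content is the reverse implication $\qp(X)\ge 2\Rightarrow \beta_{2,1}(X)>0$, which I would attack in contrapositive form: assuming there are no linear syzygies among the quadrics through $X$, produce a single point $p\in X$ whose inner projection kills every quadric.

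The first step is to make $\qp(X)\ge 2$ analytic. Fix a basis $F_1,\dots,F_q$ of $I(X)_2$ with symmetric matrices $H_1,\dots,H_q$, where $q=\dim I(X)_2$. Choosing coordinates with $p=[1:0:\cdots:0]$, the quadrics through $X_p=\pi_p(X)$ are exactly the quadrics through $X$ that are free of the coordinate dual to $p$; intrinsically $I(X_p)_2\cong \ker \psi_p$, where $\psi_p\colon I(X)_2\to V_1$, $\psi_p(F)=D_pF$ is the polar (directional derivative) linear form, and $\dim I(X_p)_2=q-\rank\psi_p$. Since the quadratic persistence is governed by a general inner projection (\Cref{lem:qp}), $\qp(X)\le 1$ holds iff $\psi_p$ is injective for general $p\in X$. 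Geometrically $\ker\psi_p=\{F\in I(X)_2 : p\in\Sing(F)\}$, so $\qp(X)\ge 2$ says that \emph{every} point of $X$ is a vertex of some quadric through $X$; equivalently $X$ lies in the determinantal locus $\{p:\rank[\,H_1p\mid\cdots\mid H_qp\,]<q\}$.

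The bridge to linear syzygies comes from differentiating a relation: if $\sum_i\ell_iF_i=0$ with $\ell_i\in V_1$, then applying $\partial_a$ and restricting to $p\in X$ (where $F_i(p)=0$) gives $\sum_i\ell_i(p)\,H_ip=0$ for every $p\in X$. Thus a linear syzygy is precisely a kernel vector of $[\,H_1p\mid\cdots\mid H_qp\,]$ depending \emph{linearly} on $p$, while $\qp(X)\ge 2$ only provides, pointwise and a priori rationally, \emph{some} nonzero kernel vector. To control the dependence on $p$ I would use the refinement that, for smooth $p$, $\im\psi_p\subseteq (T_pX)^{\perp}$, the space of linear forms vanishing on the embedded tangent space, of dimension $c$; hence $\rank\psi_p\le c$. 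Globalizing, $\psi$ is a morphism of vector bundles $I(X)_2\otimes\o_{X_{\sm}}\to\mathcal{E}$ onto the rank-$c$ bundle $\mathcal{E}$ with fiber $(T_pX)^{\perp}$, and $\qp(X)\ge 2$ means the kernel sheaf $\mathcal{K}=\ker\psi$ has positive rank. Via the kernel-vector description, linear syzygies inject into $H^0(X,\mathcal{K}(1))$, so I would extract a nonzero section from the left-exact sequence
\[
0\longrightarrow \mathcal{K}(1)\longrightarrow I(X)_2\otimes\o_X(1)\xrightarrow{\ \psi\ }\im\psi(1),
\]
by comparing $\dim\bigl(I(X)_2\otimes V_1\bigr)=(r+1)q$ with an upper bound for $h^0\!\bigl(\im\psi(1)\bigr)\le h^0\!\bigl(\mathcal{E}(1)\bigr)$ coming from the rank bound $\rank\le c$ together with nondegeneracy of $X$.

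The main obstacle is precisely this extraction. The kernel bundle $\mathcal{K}$ itself has no global sections, since a quadric singular along the nondegenerate $X$ would be singular on all of $\p^r$ and hence zero; moreover a section of $\mathcal{K}(1)$ only corresponds to a cubic $P\in V_1\cdot I(X)_2$ \emph{all of whose partial derivatives lie in} $I(X)_2$, which is genuinely weaker than the syzygy condition $P=0$. The crux is therefore to show that under $\qp(X)\ge 2$ one can choose such a section with $P=0$: this is where I expect the hypothesis $c\ge 3$ and nondegeneracy to be indispensable, ruling out the spurious cubics $P\neq 0$ with $\partial_aP\in I(X)_2$ (which would force $X$ to sit in a very special secant/tangential configuration). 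I would conclude by showing that in this range the dimension count makes $H^0(X,\mathcal{K}(1))$ strictly larger than the space of such spurious cubics, leaving a genuine linear syzygy and hence $\beta_{2,1}(X)>0$.
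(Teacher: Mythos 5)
Your reduction and the easy direction are fine: $\beta_{2,1}(X)>0\iff\ell(X)\geq 2$ (the linear strand has no internal gaps), and then $\qp(X)\geq\ell(X)\geq 2$ by \Cref{thm:qpmain}(4) -- this is exactly how the paper disposes of that implication. The problem is the converse, where your argument stops precisely at the point where the proof has to begin. You correctly translate $\qp(X)\geq 2$ into the statement that $\ker\psi_p\neq 0$ for all $p$ in a dense open subset of $X$ (via \Cref{lem:qp}), and you correctly identify that a linear syzygy is a kernel vector depending linearly on $p$; but the passage from pointwise kernel vectors to a global linear relation is never carried out. Concretely: (i) you give no upper bound on $h^0(\im\psi(1))$ -- the target sits inside the rank-$c$ twisted conormal-type bundle $\mathcal{E}(1)$, whose sections can be plentiful, and no inequality forcing $H^0(X,\mathcal{K}(1))\neq 0$ is proved; (ii) even granting a nonzero section, you acknowledge that it only yields a cubic $P=\sum\ell_iF_i$ all of whose partials lie in $I(X)_2$, i.e.\ a cubic singular along $X$, and your claim that $c\geq 3$ rules out the spurious case $P\neq 0$ is stated as an expectation with an unperformed dimension count, not an argument; (iii) there is an additional gap you do not flag: a section of $\mathcal{K}(1)$ has coefficients $\ell_i\in H^0(X,\o_X(1))$, and since $X$ is not assumed linearly normal these need not lift to linear forms on $\p^r$, so a section need not even produce a candidate Koszul relation in $V\otimes I(X)_2$ where $V$ denotes the linear forms on $\p^r$. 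As written, the implication $\qp(X)\geq 2\Rightarrow\beta_{2,1}(X)>0$ is a research plan, not a proof.

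For contrast, the paper proves this implication with elementary projective geometry and no sheaf machinery. From $\qp(X)\geq 2$ one gets $\dim I(X)_2\geq 2$, and the proof splits according to pencils of quadrics through $X$. If some $Y=Q_1\cap Q_2$ is reducible, Bezout plus nondegeneracy of $X$ (here $\codim(X)\geq 3$ guarantees $X\subsetneq Y$) force $X$ to lie on a degree-$3$, codimension-$2$ component $Y_2$, a variety of minimal degree with $\beta_{2,1}(Y_2)>0$, and monotonicity of the linear strand gives $\beta_{2,1}(X)>0$. If instead every such $Y$ is irreducible, \Cref{lem:degeneracy singular locus} shows every component of $\Sing(Y)$ is degenerate, so a general $p\in X$ is a smooth point of $Y$; then $\pi_p(Y)$ is a cubic hypersurface, and any quadric in $I(X)_2$ that is a cone with vertex $p$ would contain this cubic inside a quadric hypersurface -- impossible. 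Hence $I(X_p)_2=0$ and $\qp(X)\leq 1$, a contradiction. Note that this is exactly the conversion of your pointwise statement (``every general point of $X$ is a vertex of some quadric through $X$'') into a usable geometric dichotomy; that conversion is the missing core of your proposal, and if you want to salvage the bundle-theoretic route you would need, at minimum, genuine bounds on $h^0(\im\psi(1))$ and on the cubics singular along $X$, together with a fix for the linear-normality issue in (iii).
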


To prove this, we first show the following lemma.

\begin{lemma}\label{lem:degeneracy singular locus}
Let $Y \subset \p^r$ $(r \geq 3)$ be a complete intersection of two quadrics. 
If $Y$ is irreducible, then every irreducible component of $\Sing (Y)$ is degenerate in $\p^r$.
\end{lemma}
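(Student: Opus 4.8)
The plan is to analyze the singular locus of an irreducible complete intersection $Y = Q_1 \cap Q_2$ of two quadrics in $\p^r$ via the associated pencil $\lambda Q_1 + \mu Q_2$ and its symmetric-matrix interpretation. Writing each quadric as $x^T A_i x$ with $A_i$ symmetric, I would consider the pencil of symmetric matrices $A(\lambda,\mu) = \lambda A_1 + \mu A_2$ and recall the classical theory (Segre symbols / the Kronecker--Weierstrass normal form for pencils of symmetric forms) that classifies such pencils up to simultaneous congruence. A point $p \in Y$ is singular on $Y$ precisely when the tangent spaces of the two quadrics fail to meet transversally at $p$, i.e.\ when the gradients $A_1 p$ and $A_2 p$ are linearly dependent. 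This happens exactly when there is a ratio $(\lambda : \mu)$ with $A(\lambda,\mu) p = 0$, so $\Sing(Y)$ is the union, over the degeneracy locus of the pencil, of the intersections $Y \cap \ker A(\lambda,\mu)$.

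The key geometric input I would exploit is that $Y$ is irreducible. For a pencil of quadrics, reducibility or high-dimensional singularities are forced when the pencil is too degenerate; irreducibility pins down the Segre symbol and bounds the dimensions of the kernels $\ker A(\lambda,\mu)$. Concretely, I would argue as follows. First, the discriminant $\det A(\lambda,\mu)$ is a binary form of degree $r+1$; if it vanishes identically then every quadric in the pencil is singular, and one checks this would force $Y$ to be reducible or non-reduced, contradicting irreducibility. So the pencil has only finitely many degenerate members $Q_{(\lambda_i:\mu_i)}$, and $\Sing(Y)$ decomposes into finitely many pieces, one for each degenerate member, each of the form $Y \cap \ker A(\lambda_i,\mu_i)$. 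The second step is to bound the dimension of each kernel: I would show that an irreducible $Y$ forces each degenerate member to have corank small enough (generically corank one) that the associated linear subspace $\p(\ker A(\lambda_i,\mu_i))$ is a proper, and in fact \emph{degenerate}, subspace of $\p^r$. Since each irreducible component of $\Sing(Y)$ is contained in one such $\p(\ker A(\lambda_i,\mu_i))$, which is a proper linear subspace, the component is degenerate in $\p^r$.

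I would carry out the steps in this order: (i) set up the symmetric-matrix pencil and identify $\Sing(Y)$ with the union of $Y \cap \ker A(\lambda_i,\mu_i)$ over the degenerate members; (ii) use irreducibility of $Y$ to rule out an identically vanishing discriminant, so that there are only finitely many degenerate members; (iii) bound the coranks of the degenerate members using irreducibility, concluding that each kernel spans a proper (hence degenerate) linear subspace of $\p^r$; and (iv) conclude that every irreducible component of $\Sing(Y)$, being contained in such a degenerate subspace, is itself degenerate. An alternative to the explicit matrix computation in step (iii) is the conceptual observation that if some component of $\Sing(Y)$ were nondegenerate, it would span $\p^r$, and then the quadrics of the pencil, being singular along a nondegenerate variety, would have to be of a very restricted type that again violates irreducibility of their complete intersection.

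The main obstacle I anticipate is step (iii): controlling the corank of the degenerate members and hence the dimension of the kernels in a way that is uniform and clean, rather than descending into a long case analysis over Segre symbols. The difficulty is that a priori a degenerate member of the pencil could have large corank, yielding a large linear kernel space, and one must use irreducibility of $Y$ decisively to exclude the configurations where $\p(\ker A(\lambda_i,\mu_i))$ would be nondegenerate (or would meet $Y$ in a nondegenerate piece). I expect the cleanest route is to argue directly that if $\Sing(Y)$ had a nondegenerate component, then $Y$ would contain a linear space of dimension forcing it to split off a linear or quadric factor, contradicting that $Y$ is an irreducible complete intersection of two quadrics; making this implication precise is where the real work lies.
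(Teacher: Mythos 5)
Your reduction of $\Sing(Y)$ to the pencil is correct: writing $Q_i = x^T A_i x$, a point $p \in Y$ is singular exactly when $A(\lambda,\mu)\,p = 0$ for some member of the pencil, and if the discriminant $\det A(\lambda,\mu)$ is not identically zero there are finitely many degenerate members, so each irreducible component of $\Sing(Y)$ lies in a single $\p(\ker A(\lambda_i,\mu_i))$. In that case your step (iii) is actually easier than you fear: no corank bound is needed, since every nonzero member of the pencil has kernel a proper subspace of $\c^{r+1}$ (a zero member would contradict $Q_1, Q_2$ being a complete intersection), and any proper linear subspace of $\p^r$ is degenerate. But your step (ii) contains a genuine error: irreducibility of $Y$ does \emph{not} rule out $\det A(\lambda,\mu) \equiv 0$. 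Take $Q_1, Q_2$ involving only $x_1, \ldots, x_r$ and cutting out an irreducible complete intersection $Z \subset \p^{r-1}$; then $Y$ is the cone over $Z$ with vertex $[1:0:\cdots:0]$, which is irreducible and reduced, yet every member of the pencil is singular at the vertex, so the discriminant vanishes identically. Your argument would terminate here in a contradiction that does not exist, and your fallback remark that such $Y$ ``would have to be of a very restricted type that again violates irreducibility'' is precisely false for cones. (Singular pencils without a common kernel vector, i.e.\ with Kronecker minimal indices, also need an argument that your ``one checks'' elides, though for those one can in fact show the base locus acquires extra linear components.)

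The paper's proof avoids pencil theory entirely and handles the cone case head-on. For $Y$ not a cone, it picks a singular point $p$, projects from $p$ so that $I(Y) = \langle Q_1, Q_2 \rangle$ with $Q_1$ the cone over the image quadric (hence $\partial Q_1 / \partial x_0 = 0$) and $\partial Q_2 / \partial x_0 \neq 0$ because $p$ is not a vertex; then the $2 \times 2$ minor of the Jacobian reduces to the product $(\partial Q_1/\partial x_i)(\partial Q_2/\partial x_0)$ of two nonzero \emph{linear} forms lying in the ideal of $\Sing(Y)$, so every component of $\Sing(Y)$ lies in one of two hyperplanes. For $Y$ a cone over $Z$ with $Z$ not a cone, it uses $\Sing(Y) = $ the join of $\ver(Y)$ with $\Sing(Z)$ and reduces to the previous case. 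To repair your proof you should graft in exactly this cone reduction: treat the common-kernel case by passing to $Z$, and justify (rather than assert) that a singular pencil with no common kernel forces $Y$ reducible; with those two additions your regular-pencil argument, suitably simplified as above, gives a correct alternative proof.
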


\begin{proof}
Let $R = \mathbb{K} [x_0 , x_1 , \ldots , x_r ]$ be the homogeneous coordinate ring of $\p^r$. If $Y$ is smooth, we are done.

Next, suppose that $Y$ is singular but it has no vertex point. 
Let $p$ be a singular point of $Y$ and let $Y_p \subset \p^{r-1}$ be the inner projection of $Y$ from $p$. 
Then $Y_p$ is a quadratic hypersurface. Let $Q_1$ be the defining equation of $Y_p$ and let $Q_2$ be the quadratic form such that $I(Y) = \langle Q_1 , Q_2 \rangle$. 
We may assume that $p = [1:0:\cdots:0]$. 
Then $Q_1$ is contained in the subring $\mathbb{K} [x_1 , \ldots , x_r ]$ of $R$ and hence $\partial{Q_1}/\partial{x_0} = 0$. Then $\partial{Q_1}/\partial{x_i} \neq 0$ for some $1 \leq i \leq r$. 
Since $p$ is not a vertex point of $Y$, it follows that $\partial{Q_2}/\partial{x_0} \neq 0$. 
Thus from the Jacobian matrix of $Y$, the homogeneous ideal of $\mbox{Sing} (Y)$ contains the nonzero homogeneous quadratic polynomial
$$\frac{\partial{Q_1}}{\partial{x_i}} \times \frac{\partial{Q_2}}{\partial{x_0}} .$$
This shows that every irreducible component of $\mbox{Sing} (Y)$ is degenerate in $\p^r$.

Finally, suppose that $Y$ is a cone over $Z \subset \p^{r-s}$ where $Z$ is not a cone. Then
$$\mbox{Vert}(Y) = \p^{s-1} \quad \mbox{for some}\ s \geq 1$$
and $\mbox{Sing} (Y)$ is the join of $\mbox{Vert}(Y)$ and $\mbox{Sing} (Z)$. Also every irreducible component of $\mbox{Sing} (Z)$ is degenerate in $\p^{r-s}$. Therefore every irreducible component of $\mbox{Sing} (Y)$ is degenerate in $\p^r$.
\end{proof}

\begin{proof}[Proof of \Cref{prop:qp(X) bigger than 2}]
Suppose that $\qp (X) \geq 2$ and let $X_p \subset \p^{r-1}$ be the inner projection of $X$ from a general point $p \in X$. Since $\qp(C) \geq 2$, it holds that $h^0 (\p^r,\i_X (2)) \geq 2$. Suppose that there exist $\mathbb{K}$-linearly independent quadrics $Q_1$ and $Q_2$ in $\H^0 (\p^r,\i_X (2))$ such that $Y = Q_1 \cap Q_2$ is not irreducible. Then, by Bezout's theorem, the minimal irreducible decomposition of $Y$ should be
$$Y = Y_1 \cup Y_2$$
where $Y_1$ and $Y_2$ are $(r-2)$-dimensional such that $\mbox{deg} (Y_1 ) =1$ and $\mbox{deg} (Y_2 ) =3$. Then $Y_1 = \p^{r-2}$ and $Y_2$ is a variety of minimal degree. Therefore $X$ should be contained in $Y_2$ since it is nondegenerate in $\p^r$. Then we have
$$\beta_{2,1} (X) \geq \beta_{2,1} (Y_2) >0. $$
Now, we consider the case where $Y = Q_1 \cap Q_2$ is irreducible for any two $\mathbb{K}$-linearly independent quadrics $Q_1$ and $Q_2$ in $\H^0 (\p^r,\i_X (2))$. By Lemma \ref{lem:degeneracy singular locus}, every irreducible component of the singular locus of $Y$ is degenerate in $\p^r$ and hence a general point $p$ of $X$ is in the smooth locus of $Y$. Then $\pi_p (Y) \subset \p^{r-1}$ is a cubic hypersurface. In particular, $X_p$ satisfies no nonzero quadratic polynomials of the form $aQ_1 +bQ_2$. This implies that $X_p$ satisfies no quadratic equation and hence $\qp (X) = 1$. This is a contradiction.

If $\beta_{2,1} (X) > 0$, then $\qp(X) \geq 2$ since the general inner projection of $X$ is contained in a quadric hypersurface.
\end{proof}

We are now in a position to prove \Cref{mainthm:codim3}.

\begin{proof}[Proof of \Cref{mainthm:codim3}]
We have $\qp(X)\leq 2$ due to \Cref{thm:BSSV1}.
By \Cref{prop:qp(X) bigger than 2}, we get $\qp(X)=\ell(X)$.
If $X$ is totally real, then $\py(X)\geq r+1-\qp(X)$ by \Cref{thm:qpmain}.
When $\qp(X)=2$, we have $\ell(X)=2$ so that $X$ is a divisor of a totally real variety $Y$ of minimal degree by \Cref{lem:totally_real_Kp1}.
Hence $\py(X)\leq r-1$ and we get $\py(X)=r-1$.
When $\qp(X)=1$, let $Q$ be the quadric hypersurface containing $X$.
As $X$ is totally real, we may choose $Q$ to be defined by a real quadratic form. 
As the singular locus of $Q$ is degenerate and $Q$ contains a nondegenerate set $X(\r)$ of real points, the hypersurface $Q$ contains a smooth real point. 
Hence $Q$ is totally real.
Then $\py(X)\leq \py(Q)=r$, and we get $\py(X)=r$.
When $\qp(X)=0$, we immediately get $\py(X)=\py(\p^r)=r+1$.
\end{proof}

\bigskip

\section{The main theorem}\label{sec:mainthm}
In this section, we prove the main theorem (\Cref{mainthm}).
To prove it, we need to solve the lifting problem for $d\geq 2c+3$.
We start with reminding the definition of the partial elimination ideals.
Let $S=k[x_0,\cdots,x_r]$ and pick a point $q\in \p^r$.
By a coordinate change, we may assume $q=[1,0,\cdots,0]$.
Denote $R=k[x_1,\cdots,x_r]$.
For a homogeneous ideal $I\subset S$, we define the $i$-th partial elimination ideal of $I$ with respect to $q$ as follows:
\[
    K_i(I,q):=\{f\in R\mid \exists F\in I \textit{ s.t. }F=fx_0^i+(\text{lower $x_0$-degree terms})\}\cup \{0\}.
\]
Then $K_i(I,q)$ is a homogeneous ideal of $R$. For the detailed theory regarding partial elimination ideals, \textit{cf.} \cite{MR1648665}.

On the other hand, the result of Castelnuovo \cite{castelnuovo1889ricerche} implies the following:
\begin{itemize}
    \item when $d\geq 2c+1$, $\dim I(X)_2\leq \binom{c}{2}$;
    \item when $d\geq 2c+3$, if $\dim I(X)_2=\binom{c}{2}$, then $X$ is contained in a variety of minimal degree as a divisor.
\end{itemize}

Using these, we solve the lifting problem for $d\geq 2c+3$.

\begin{theorem}\label{lp_2c+3}
    Let $X\subset\p^r$ be a nondegenerate projective variety of dimension $n$, codimension $c$, and degree $d\geq 2c+3$. 
    Let $X_q\subset\p^{r-1}$ denote the inner projection of $X$ from a general point $q\in X$.
    If $X_q$ is contained in a variety of minimal degree as a divisor, then so is $X$.
\end{theorem}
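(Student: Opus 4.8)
The plan is to translate the hypothesis about the projected variety $X_q$ into a precise statement about the dimension of the space of quadrics, and then invoke Castelnuovo's bound to lift this back to $X$. The key observation is that the quantity $\dim I(X)_2$ is controlled by Castelnuovo's result: since $d \geq 2c+3 > 2c+1$, we always have $\dim I(X)_2 \leq \binom{c}{2}$, and equality forces $X$ to be a divisor of a variety of minimal degree. So it suffices to prove that $\dim I(X)_2 = \binom{c}{2}$. Note that $X_q$ has codimension $c-1$ in $\p^{r-1}$ and degree $d$ (inner projection from a smooth point drops degree by one only when $q$ is a smooth point; more precisely, the degree of $X_q$ is $d-1$ if $q$ is a smooth point of $X$ and the projection is birational — I must be careful about exactly which numerical invariants $X_q$ carries, since this governs which Castelnuovo bound applies to $X_q$).

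First I would set up the partial elimination ideals $K_i(I(X),q)$ introduced above, taking $q = [1:0:\cdots:0]$, and recall the fundamental relation between the graded pieces of $I(X)$ and those of the $K_i$. The zeroth partial elimination ideal $K_0(I(X),q)$ is precisely the homogeneous ideal of the projection $X_q$ inside $R = k[x_1,\dots,x_r]$, so its degree-$2$ part gives the quadrics through $X_q$. The link I want is the exact sequence in each degree relating $I(X)_2$ to $K_0(I(X),q)_2$ and $K_1(I(X),q)_1$: a quadric $F \in I(X)_2$ can be written $F = f x_0 + g$ with $f \in R_1$, $g \in R_2$, and the leading-coefficient map $F \mapsto f$ realizes $K_1(I(X),q)_1$ as a quotient, with kernel $K_0(I(X),q)_2 = I(X_q)_2$. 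Thus
\begin{equation*}
\dim I(X)_2 = \dim I(X_q)_2 + \dim K_1(I(X),q)_1.
\end{equation*}
By hypothesis $X_q$ is a divisor in a variety of minimal degree, so by the second Castelnuovo bullet (applied in $\p^{r-1}$, where the codimension of $X_q$ is $c-1$) we get $\dim I(X_q)_2 = \binom{c-1}{2}$. The remaining task is to show $\dim K_1(I(X),q)_1 = c-1$, which would give $\dim I(X)_2 = \binom{c-1}{2} + (c-1) = \binom{c}{2}$, exactly the equality case, finishing the proof.

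The main obstacle, and the technical heart of the argument, is controlling $K_1(I(X),q)_1$ from below: I need to produce $c-1$ linearly independent linear forms in the first partial elimination ideal. The upper bound $\dim K_1(I(X),q)_1 \leq c-1$ should follow from a dimension count combined with the first Castelnuovo bullet applied to $X$ (since $\dim I(X)_2 \leq \binom{c}{2}$ and $\dim I(X_q)_2 = \binom{c-1}{2}$ forces $\dim K_1(I(X),q)_1 \leq c - 1$). For the lower bound I would argue geometrically: because $X_q$ lies on a variety $Y'$ of minimal degree as a divisor, the generic-projection structure forces the quadrics cutting out $Y'$ to lift, via the leading-term map, to genuinely $x_0$-dependent quadrics in $I(X)$, contributing exactly $c-1$ independent linear forms to $K_1$. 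Here I expect to use that a general point $q \in X$ is a smooth point lying in the smooth locus of the ambient minimal-degree variety, so that the projection is well-behaved and the relevant leading coefficients do not degenerate. Pinning down this lift rigorously — ensuring the genericity of $q$ guarantees both the expected degree drop for $X_q$ and the non-vanishing of the needed leading coefficients — is the delicate step; once $\dim K_1(I(X),q)_1 = c-1$ is established, the conclusion is immediate from Castelnuovo's equality criterion.
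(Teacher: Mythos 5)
Your overall reduction coincides with the paper's: both use the leading-coefficient exact sequence $\dim I(X)_2 = \dim I(X_q)_2 + \dim K_1(I(X),q)_1$, both identify $\dim I(X_q)_2 = \binom{c-1}{2}$ from the hypothesis together with Castelnuovo's bound applied in $\p^{r-1}$ (the point being $I(Y)_2 \subseteq I(X_q)_2$ for the minimal-degree variety $Y$ containing $X_q$, forcing equality $I(X_q)_2 = I(Y)_2$), and both finish with Castelnuovo's equality criterion for $d \geq 2c+3$. But at the heart of the theorem --- the lower bound $\dim K_1(I(X),q)_1 \geq c-1$ --- your sketch has a genuine gap, and the mechanism you propose is in fact backwards. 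The quadrics cutting out $Y$ correspond, under the projection, to quadrics in $I(X)_2$ that do \emph{not} involve $x_0$: they constitute exactly $K_0(I(X),q)_2 = I(X_q)_2$, i.e., the kernel of the leading-coefficient map, so they cannot ``contribute $c-1$ independent linear forms to $K_1$.'' What actually controls $K_1$ is the embedded tangent space at $q$ of the scheme $W \subset \p^r$ cut out by \emph{all} of $I(X)_2$, via $\dim K_1(I(X),q)_1 = \codim T_q W$; since $\dim W$ may be $n+1$ and $W$ may a priori be singular along a subvariety meeting $X$ in a dense set, neither genericity of $q \in X$ nor smoothness of $q$ on an ambient minimal-degree variety rules out $\dim T_q W \geq n+2$, which would destroy the count.

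The paper supplies precisely this missing ingredient with a separate geometric construction. Projecting $Y$ successively from general points produces a flag $Y = Y_1, Y_2, \ldots, Y_{c-1} = \p^{n+1}$, and in coordinates dual to the centers of projection one obtains quadrics $f_0, f_1, \ldots, f_{c-2} \in J_2$ (where $J$ is generated by $I(X)_2$) with $\partial f_i/\partial x_i \neq 0$ and $\partial f_i/\partial x_j = 0$ for $j < i$: here $f_0$ exists because $q \notin \ver(W_{\mathrm{red}})$, and $f_i$ for $i \geq 1$ comes from $I(Y)_2$ and the ideals of its successive projections. The Jacobian matrix of $W$ then contains a triangular $(c-1)\times(c-1)$ submatrix $M$ with nonzero linear forms on the diagonal, so the jumping locus $U = \{x \in W_{\mathrm{red}} \mid \dim T_x W \geq n+2\}$ is contained in $V(\det M)$, a union of hyperplanes; hence every component of $U$ is degenerate. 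Crucially, $W$ and $U$ are defined by $I(X)_2$ alone and do not depend on the choice of $q$, so nondegeneracy of $X$ lets one choose a general $q \in X$ with $q \notin U$, giving $\dim T_q W \leq n+1$ and thus $\dim K_1(I(X),q)_1 \geq c-1$. Without this argument (or some substitute showing the quadrics through $X$ define a scheme with tangent space of dimension at most $n+1$ at a general point of $X$), your outline does not close. A minor further remark: the separate upper bound $\dim K_1(I(X),q)_1 \leq c-1$ you discuss is not needed --- once the lower bound gives $\dim I(X)_2 \geq \binom{c}{2}$, Castelnuovo's inequality $\dim I(X)_2 \leq \binom{c}{2}$ already forces equality.
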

\begin{proof}
    Denote $p_0:=q$ and $Y_1:=Y$.
    Let $p_1$ be a general point of $Y_1$.
    Denote the projection of $Y_1\subset\p^{r-1}$ from $p_1$ by $Y_2\subset\p^{r-2}$, and let $p_2$ be a general point of $Y_2$.
    By repeating this, we get $p_0,p_1,\cdots,p_{c-2}$ and $Y_{c-1}=\p^{n+1}$.
    We pick $n+2$ general points in $Y_{c-1}=\p^{n+1}$ and denote them by $p_{c-1},p_c,\cdots,p_r$.
    Then $p_0,p_1,\cdots,p_r$ are linearly independent.
    Let $\{x_i = p_i^* \mid 0 \leq i \leq r \}$ be the dual basis of $\{p_0,p_1,\cdots,p_r\}$.
    Thus $x_0,x_1,\cdots, x_r$ form a homogeneous coordinate system of $\p^r$.
    Denote by $S$ and $R$ the homogeneous coordinate rings of $\p^r$ and $\p^{r-1}$, respectively.
    Let $J\subset S$ be the ideal generated by $I(X)_2$ and $W\subset\p^r$ be the scheme defined by $J$.
    Then $$(J\cap R)_2=(I(X)\cap R)_2=I(X_q)_2.$$
    As $d\geq 2c+3$, we have $\deg X_q\geq 2\codim X_q+4$. 
    Hence $\dim I(X_q)_2\leq \binom{\codim X_q}{2}$. 
    However, as $X_q$ is contained in a variety of minimal degree as a divisor, it holds that 
    $$\dim I(X_q)_2=\binom{\codim X_q}{2}\text{, and therefore}~ I(X_q)_2=I(Y)_2.$$
    In summary, we have $(J\cap R)_2=I(Y)_2$ and $J\cap R\supset I(Y)$.
    
    As $p_0$ is not contained in the vertex set $\ver(W_{\text{red}})$ of $W_{\text{red}}$, there exists $f_0\in J_2$ such that $\frac{\partial f_0}{\partial x_0}\neq 0$.
    Also as $p_1\notin\ver(Y)\subset\p^{r-1}$, there exists $f_1\in I(Y)_2$ such that $\frac{\partial f_1}{\partial x_1}\neq 0$. Since $f_1\in R$, we have $\frac{\partial f_1}{\partial x_0}=0$.
    Let $Y_{p_1}\subset\p^{r-2}$ be the inner projection of $Y$ from $p_1$.
    As $p_2\notin\ver(Y_{p_1})$, there exists $f_2\in I(Y_{p_1})_2$ such that $\frac{\partial f_2}{\partial x_2}\neq 0$ while $\frac{\partial f_2}{\partial x_0}=\frac{\partial f_2}{\partial x_1}=0$.
    By repeating this, we get $f_0,f_1,\cdots,f_{c-2}\in J_2$ such that $\frac{\partial f_i}{\partial x_i}\neq 0$ while $\frac{\partial f_i}{\partial x_j}=0$ for all $j<i$.
    Hence by taking a proper generating set of $J$, one can regard the Jacobian matrix of $W\subset\p^r$ contains the following form of $(c-1) \times (c-1)$ submatrix where $*$ represents a nonzero linear form.
    \[
    M=
    \begin{bmatrix}
        * & 0 & 0 & 0 & 0 \\
        ? & * & 0 & 0 & 0 \\
        ? & ? & * & 0 & 0 \\
        ? & ? & ? & * & 0 \\
        ? & ? & ? & ? & * \\
    \end{bmatrix}
    \]
    Note that $\dim W = n$ or $n+1$. Define the locus $U:=\{x\in W_{\text{red}}\mid \dim T_xW\geq n+2\}$.
    Then $U\subset V(\det M)$ as sets.
    As $\det M$ is decomposable to linear forms, every irreducible component of $U$ is degenerate.
    Note that the scheme $W$ and the locus $U$ does not depend on the choice of $q\in X$.
    Therefore, $q\notin U$ and $\dim T_qW\leq n+1$.
    Hence $$\dim (K_1(I(X),q))_1=\dim (K_1(J,q))_1=\codim T_qW\geq c-1.$$
    Since $\dim I(X)_2=\dim I(X_q)_2+\dim (K_1(I(X),q))_1$, we obtain
    $$\dim I(X)_2\geq\binom{e-1}{2}+c-1=\binom{c}{2}.$$
    Since we have $\dim I(X)_2\leq \binom{c}{2}$ from the condition, $$\dim I(X)_2=\binom{c}{2}.$$
    Together with $d\geq 2c+3$, we conclude that $X$ is contained in a variety of minimal degree as a divisor.
\end{proof}

By combining the totally real $\mathcal{K}_{p,1}$-theorem (\Cref{lem:totally_real_Kp1}), the result for codimension three varieties (\Cref{mainthm:codim3}), and the lifting theorem (\Cref{lp_2c+3}), we prove the main theorem.

\begin{proof}[Proof of \Cref{mainthm}]
    To show \eqref{mainthm:VAMD}, assume $d=c+2$ and $X$ is not aCM. Then $X$ is contained in a variety $Y$ of minimal degree as a divisor, so $$\qp(X)\geq \qp(Y)=c-1.$$ 
    As $\qp(X)\lneq c$, we get $\qp(X)=c-1$.
    Thus $$\py(X)\geq r+1-\qp(X)=n+2.$$
    By the totally real $\mathcal{K}_{p,1}$-theorem (\Cref{lem:totally_real_Kp1}), $Y$ must be totally real.
    Hence $$\py(X)\leq \py(Y)=n+2.$$

    To show \eqref{mainthm:general}, first note that the case $c=3$ is solved by \Cref{mainthm:codim3}.
    Assume $c\geq 4$ and $d\geq 2c+3$.
    The directions (iii)$\Rightarrow$(ii)$\Rightarrow$(i) follow from the totally real $\mathcal{K}_{p,1}$-theorem (\Cref{lem:totally_real_Kp1}) and the results of \cite{MR4397034}.
    To show (i)$\Rightarrow$(iii), assume $\qp(X)=c-1$. By taking general inner projections of $X$, we get a variety $\overline{X}\subset \p^{n+3}$.
    Then $\qp(\overline{X})=2$.
    By \Cref{mainthm:codim3}, $\overline{X}$ is a divisor of a variety of minimal degree.
    By applying \Cref{lp_2c+3} inductively, we conclude that $X$ must be a divisor on a variety of minimal degree.
\end{proof}

\begin{remark}\label{transform_to_lp}
	If the statement of the lifting problem holds for all $d\geq c+3$, then the aCM condition in \Cref{thm:BSSV2} can be removed. More precisely, as in the proof above, it suffices to show (i)$\Rightarrow$(iii) in the case $d\geq c+3$. If $X$ satisfies $\qp(X)=c-1$, then by taking successive general inner projections, we obtain a codimension three variety $Y$ with $\qp(Y)=2$. Therefore, by \Cref{mainthm:codim3}, $Y$ is a divisor of a variety of minimal degree. The lifting problem asserts that $X$ is also a divisor of a variety of minimal degree, which allows us to conclude (iii).
\end{remark}

\bigskip

\section{Curves of maximal regularity}\label{sec:MaxRegCurves}
In this section, we study the quadratic persistence and the Pythagoras number of curves of maximal regularity. 
Let $\mathcal{C}\subset\p^r$ a nondegenerate curve of maximal regularity of degree $d$. Note that when $r\leq 3$, the quadratic persistence and the Pythagoras numbers are easily determined, so we assume $r\geq 4$.
\begin{remark}
If the curve $\mathcal{C}\subset\p^r$ has the maximal regularity, then there exists an extremal secant line $L$ to $\mathcal{C}$, which allows the construction of a rational normal threefold scroll containing $\mathcal{C}$.  
More precisely, $\mathcal{C}$ is a smooth rational curve and has a unique $(d - r + 2)$-secant line $L$ \cite{MR0704401}.  
Moreover, the image of $\mathcal{C}$ under the projection from $L$ has degree $r - 2$ in $\mathbb{P}^{r-2}$, i.e., it is the rational normal curve $S(r-2)$.
This leads to construct a rational normal threefold scroll $J(L,S(r-2))=S(0,0,r-2)$ containing $\mathcal{C}$ where $J(X,Y)$ denotes the join of $X$ and $Y$.
Note that if $\mathcal{C}$ is contained in $S(1, r-2)$, then the extremal secant line is $S(1) \subset S(1,r-2)$.  
\end{remark}

Unlike the usual situation, when we consider whether the image of a projection of a complex projective variety is totally real, the linear subspace corresponding to the codomain of the projection matters.

\begin{proposition}\label{prop:tr_projection}
    Let $X\subset\p^r$ be a nondegenerate totally real variety. If $L,\Lambda\subset\p^r$ are totally real linear subspaces with $\dim L+\dim\Lambda=r-1$, then the image $\overline{\pi_L(X\backslash L)}$ of $X$ under the projection $\pi_L:\p^r\backslash L\to \Lambda$ is totally real.
\end{proposition}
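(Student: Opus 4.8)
The plan is to reduce the statement to a density argument, exploiting that a projection from a real center onto a real target is itself defined over $\r$. First I would choose a homogeneous coordinate system $x_0,\dots,x_r$ of $\p^r$ consisting of real linear forms and adapted to the two subspaces: since $L$ and $\Lambda$ are totally real and, by the hypothesis $\dim L+\dim\Lambda=r-1$, complementary, I can arrange $\Lambda=\{x_{\lambda+1}=\cdots=x_r=0\}$ and $L=\{x_0=\cdots=x_\lambda=0\}$ with $\lambda=\dim\Lambda$. In these coordinates $\pi_L$ is simply $[x_0:\cdots:x_r]\mapsto[x_0:\cdots:x_\lambda]$, so it is a morphism defined over $\r$; in particular it carries real points of $\p^r\setminus L$ to real points of $\Lambda$. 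This is precisely where the hypothesis that $\Lambda$ (and not merely the center $L$) is totally real enters: a non-real target would destroy the reality of the image even though the abstract projection is unchanged.

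Next I would verify that $X(\r)\setminus L$ is Zariski dense in $X$. Since $X$ is nondegenerate and $L$ is a proper linear subspace, $X\not\subset L$, so $X\cap L$ is a proper closed subset of the irreducible variety $X$. If $X(\r)\setminus L$ were not dense, then $X(\r)$ would lie in the union of the two proper closed subsets $\overline{X(\r)\setminus L}$ and $X\cap L$, contradicting the density of $X(\r)$ guaranteed by total reality of $X$.

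The central step is then to transport density through the projection. Writing $U=X\setminus L$ and $Y=\overline{\pi_L(U)}$, the restriction $\pi_L|_U\colon U\to Y$ is a dominant morphism of irreducible varieties. For any Zariski dense subset $D\subseteq U$ its image is dense in $Y$: the set $(\pi_L|_U)^{-1}(\overline{\pi_L(D)})$ is closed in $U$ and contains $D$, hence equals $U$, so $Y=\overline{\pi_L(U)}\subseteq\overline{\pi_L(D)}$. Applying this with $D=X(\r)\setminus L$ shows that $\pi_L(X(\r)\setminus L)$ is Zariski dense in $Y$. By the first step this image consists of real points of $\Lambda$, hence of $Y$, so $Y(\r)$ is Zariski dense in $Y$ and $Y$ is totally real.

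I do not expect a serious obstacle, as every ingredient is elementary; the only points requiring care are the choice of real adapted coordinates, which relies essentially on both $L$ and $\Lambda$ being totally real and complementary, and the verification that $X(\r)\setminus L$ remains dense after discarding the points lying on the center.
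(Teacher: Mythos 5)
Your proof is correct and takes essentially the same route as the paper: the paper also observes that since $L$ and $\Lambda$ are cut out by real linear forms, the projection sends each $x\in X(\r)$ to the real point $\Lambda\cap\langle L,x\rangle$, and lets density of $X(\r)$ carry over to the image. You merely make explicit the two steps the paper leaves implicit (density of $X(\r)\setminus L$ and transport of density along the dominant morphism), which is a faithful elaboration rather than a different argument.
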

\begin{proof}
    As $L$ and $\Lambda$ are totally real, these can be defined by real linear forms.
    Then the result follows since for any $x\in X(\r)$, the point $\Lambda\cap \langle L,x\rangle$ is real.
\end{proof}

In the following lemma, we solve the lifting problem for curves of maximal regularity.

\begin{lemma}\label{lem:maxreg_lifting}
    Let $\mathcal{C}\subset\p^r$ ($r\geq 5$) be a nondegenerate curve of maximal regularity of degree $d\geq r+2$ and $L$ be its $(d-r+2)$-secant line. Let $p\in \mathcal{C}$ be the general point and $\mathcal{C}_p\subset\p^{r-1}$ be the inner projection of $\mathcal{C}$ from $p$. If $\mathcal{C}_p$ is contained in $S(1,r-3)$, then $\mathcal{C}$ is contained in $S(1,r-2)$.
\end{lemma}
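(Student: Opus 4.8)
The plan is to translate the containment "$\mathcal{C}\subset S(1,r-2)$" into a numerical condition on a single rational map, and then to show that this condition is preserved under the inner projection $\pi_p$. As recalled above, $\mathcal{C}$ always lies on the threefold scroll $T=J(L,S(r-2))=S(0,0,r-2)$, the cone over the rational normal curve $S(r-2)$ with vertex the $(d-r+2)$-secant line $L$. First I would fix homogeneous coordinates so that $L$ and the span $A:=\langle S(r-2)\rangle\cong\p^{r-2}$ are complementary, and consider the linear projection $\phi:=\pi_A\colon\mathcal{C}\to L\cong\p^1$. Parametrizing $\mathcal{C}$ inside $T$ and matching it against the parametrization of a sub-scroll $S(1,r-2)\subset T$ (whose rulings join $L$, traversed once, to $S(r-2)$), one sees that $\mathcal{C}$ lies on such a surface scroll if and only if $\deg\phi=1$; in that case the scroll is precisely the graph-scroll determined by $\phi$, whereas $\deg\phi\ge 2$ forces $T$ to be the only minimal-degree scroll through $\mathcal{C}$. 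This reduces the whole lemma to a statement about the degree of the directrix map.

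Next I would set up the projected picture. For general $p\in\mathcal{C}$ we have $p\notin L$, so $p$ is a smooth point of $T$, and the inner projection of a variety of minimal degree from a smooth point is again of minimal degree: thus $\pi_p(T)=T_p=S(0,0,r-3)$ is a threefold scroll in $\p^{r-1}$, with vertex line $L'=\pi_p(L)$ and some base rational normal curve $S(r-3)$, and $\mathcal{C}_p\subset T_p$. Since $\pi_p|_{\mathcal{C}}\colon\mathcal{C}\to\mathcal{C}_p$ is birational for general $p$, applying the first paragraph to $\mathcal{C}_p$ shows that the hypothesis $\mathcal{C}_p\subset S(1,r-3)$ is equivalent to $\deg\phi_p=1$, where $\phi_p\colon\mathcal{C}_p\to L'$ is the directrix projection of the projected picture. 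The goal is therefore to prove $\deg\phi=\deg\phi_p$.

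The crux, and the step I expect to be the main obstacle, is this degree comparison. The natural approach is to compare the two composite rational maps $\mathcal{C}\dashrightarrow L'$, namely $\phi_p\circ(\pi_p|_{\mathcal{C}})$ and $(\pi_p|_L)\circ\phi$: the former factors as a birational map followed by $\phi_p$, hence has degree $\deg\phi_p$, while the latter is $\phi$ followed by the isomorphism $\pi_p|_L\colon L\xrightarrow{\sim}L'$, hence has degree $\deg\phi$, so their coincidence yields the desired equality. The subtlety is that the base curve $S(r-3)$ and directrix $L'$ of $T_p$ are \emph{not} simply the naive projections of $S(r-2)$ and $L$ from $p$ (one checks $\langle S(r-3)\rangle\subsetneq\pi_p(A)$), so establishing that the two composites agree requires a careful identification of the scroll structure of $T_p$ produced by $\pi_p$, together with the fact that the extremal secant line $L$ of $\mathcal{C}$ projects to the extremal secant line $L'$ of $\mathcal{C}_p$. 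This is exactly where the special geometry of curves of maximal regularity must be used: because here the degree may satisfy only $r+2\le d\le 2r$, the Castelnuovo-type bounds underlying \Cref{lp_2c+3} are unavailable, and the degree-preservation has to be extracted directly from the scroll and the secant line rather than from a dimension count of quadrics.

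Granting the comparison $\deg\phi=\deg\phi_p$, the conclusion is immediate: the hypothesis gives $\deg\phi_p=1$, hence $\deg\phi=1$, and by the characterization of the first paragraph $\mathcal{C}$ is contained in the surface scroll $S(1,r-2)\subset T$. As a contingency, if the direct map-comparison proves awkward, I would instead bound the quadrics through $\mathcal{C}$ by combining $I(T)_2$ with the quadrics defining the cone $J(p,S(1,r-3))=S(0,1,r-3)$ (a minimal-degree threefold through $\mathcal{C}$ coming from the hypothesis) and argue that together they cut out a surface scroll; but I expect the directrix-degree argument to be the cleaner route, with Step 3 the decisive point.
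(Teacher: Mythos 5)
Your overall skeleton (reduce containment in $S(1,r-2)$ to a degree-one directrix correspondence, then transport this across $\pi_p$) is the same in spirit as the paper's, but the proposal contains a genuine gap, and it sits exactly at the two places you lean on. First, your Step~1 equivalence is false as stated. The map $\phi:=\pi_A|_{\mathcal{C}}$ is \emph{not} an invariant of $\mathcal{C}$: it depends on the choice of the complementary space $A$, equivalently on the choice of the base section $T\cap A$ of the cone $T=J(L,S(r-2))$. Indeed, if $\mathcal{C}\subset S=S(1,r-2)$, write $\pic(S)=\z E\oplus\z f$ with $E$ the directrix; then $\deg\mathcal{C}=d$ and $\mathcal{C}\cdot E=d-r+2$ (the extremal secant condition) force $\mathcal{C}\equiv E+(d-1)f$, so $\mathcal{C}$ meets any degree-$(r-2)$ section (class $E+(r-3)f$) of $S$ in $d-1$ points. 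Since the fibers of $\phi$ are the residual parts of hyperplane sections through $A$, one has $\deg\phi=d-\#(\mathcal{C}\cap A)$; hence $\deg\phi=1$ holds only for the very special complementary spaces $A$ whose section $T\cap A$ lies on the surface scroll through $\mathcal{C}$, while for a \emph{general} complementary $A$ the curves $\mathcal{C}$ and $T\cap A$ are disjoint inside the threefold $T$ and $\deg\phi=d$ --- even though $\mathcal{C}\subset S(1,r-2)$. (This is easy to confirm by parametrizing: with $\mathcal{C}$ given by $t\mapsto[a:b:c\,\nu(t)]$, changing the section replaces $[a:b]$ by $[a-c\alpha:b-c\beta]$, which changes the degree.) The correct statement is existential: $\mathcal{C}$ lies on an $S(1,r-2)\subset T$ if and only if \emph{some} ruling section of $T$ meets $\mathcal{C}$ in $d-1$ points. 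So ``$\deg\phi$'' is not well defined without specifying the section, and there is no canonical quantity to compare.

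This undermines your Step~3, which you yourself flag as the main obstacle and do not prove. The identity $\phi_p\circ(\pi_p|_{\mathcal{C}})=(\pi_p|_L)\circ\phi$ cannot hold for independently chosen sections: choosing $A$ general upstairs and the hypothesis-adapted section downstairs, the left side has degree $\deg\phi_p=1$ while the right side has degree $\deg\phi=d$, so the two composites differ. What is actually required is a \emph{compatible} pair of sections, i.e., a lifting of the good section $S(r-3)\subset S(1,r-3)$ (which meets $\mathcal{C}_p$ in $d-2$ points) to a good section of $T$ upstairs --- and producing this lift is the entire content of the lemma, not a technicality to be deferred. This is precisely what the paper's proof constructs: it sets $\Lambda_{r-3}:=\langle S(r-3)\rangle$, checks $\Lambda_{r-3}\cap\langle p,L\rangle=\emptyset$, defines $\Lambda_{r-2}:=\langle p,\Lambda_{r-3}\rangle$ (disjoint from $L$), realizes the good upstairs base curve as $S(r-2)=\pi_L(\mathcal{C})$ \emph{inside this special} $\Lambda_{r-2}$ (so that it passes through $p$ and projects from $p$ onto $S(r-3)$), and then lifts each ruling of $S(1,r-3)$ by proving the triple $\widetilde{l}(t),\widetilde{s}(t),\widetilde{c}(t)$ is collinear, using the uniqueness of lifts under the isomorphic projections $\pi_p$ on $L$, $S(r-2)$, and $\mathcal{C}$. (Incidentally, your parenthetical is half right: the vertex line of $T_p$ \emph{is} the naive projection $\pi_p(L)$; it is only the base curve that cannot be taken to be $\pi_p(S(r-2))$, which has degree $r-2$ and meets $L_p$.) Your fallback via counting quadrics is likewise only a wish; as you correctly observe, the Castelnuovo-type bounds behind \Cref{lp_2c+3} are unavailable in the range $r+2\le d\le 2r$, which is why the explicit ruling-lifting construction is needed.
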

\begin{proof}
    Denote by $\Lambda_{r-1} := \mathbb{P}^{r-1}$ the hyperplane on which $\mathcal{C}_p$ lies.
    The curve $\mathcal{C}_p$ is also a curve of maximal regularity with the $(d-r+2)$-secant line $L_p$ which is a projection of $L$ from $p$.
    Hence $\mathcal{C}_p$ is a smooth rational curve, and the inner projection of $\mathcal{C}$ from $p$ is an isomorphic projection.
    Suppose $\mathcal{C}_p\subset S(1,r-3)$.
    Then $L_p$ corresponds to $S(1)\subset S(1,r-3)$.
    Pick $S(r-3)\subset S(1,r-3)$.
    As its linear span $\Lambda_{r-3}:=\p^{r-3}$ does not intersect with $L_p=S(1)\subset S(1,r-3)$, it does not intersect $\langle p,L\rangle$.
    Then $\Lambda_{r-3}$, $L$, $p$ are linearly independent, i.e., they span the whole $\p^r$.
    Define $\Lambda_{r-2}:=\langle p,\Lambda_{r-3}\rangle\cong\p^{r-2}$. Then $\Lambda_{r-2}$ does not intersect $L$.
    To summarize, we have the following commutative diagram.
    \[\begin{tikzcd}
        \p^r\arrow[r,dashed,"\pi_p"]\arrow[d,dashed,"\pi_L"]&\Lambda_{r-1}\arrow[d,dashed,"\pi_{L_p}"]\\
        \Lambda_{r-2}=\langle p,\Lambda_{r-3}\rangle\arrow[r,dashed,"\pi_p"]&\Lambda_{r-3}
    \end{tikzcd}\]
    By taking the projection of $\mathcal{C}$ to $\Lambda_{r-2}$ from the center $L$, we get the rational normal curve $S(r-2)\subset \Lambda_{r-2}$ that contains $p$. 
    Since $S(r-3)\subset\Lambda_{r-3}$ is the projection of $\mathcal{C}_p\subset\Lambda_{r-1}$ from $L_p$, it is the projection of $\mathcal{C}$ from $\langle p,L\rangle$.
    Hence $S(r-3)\subset\Lambda_{r-3}$ is the projection of $S(r-2)\subset \Lambda_{r-2}$ from $p$ since the former one is $\pi_{\langle p,L\rangle}(\mathcal{C})$ and the latter one is $\pi_L(\mathcal{C})$.
    Therefore, we obtain the following commutative diagram.
    \[\begin{tikzcd}
        \mathcal{C}\arrow[r,dashed,"\pi_p"]\arrow[d,dashed,"\pi_L"]&\mathcal{C}_p\arrow[d,dashed,"\pi_{L_p}"]\\
        S(r-2)\arrow[r,dashed,"\pi_p"]&S(r-3)
    \end{tikzcd}\]
    Note that the rational maps $\pi_p|_{\mathcal{C}}:\mathcal{C}\dashrightarrow \mathcal{C}_p$ and $\pi_p|_{S(r-2)}:S(r-2)\dashrightarrow S(r-3)$ can be extended to isomorphisms $\pi_p|_{\mathcal{C}}:\mathcal{C}\to \mathcal{C}_p$ and $\pi_p|_{S(r-2)}:S(r-2)\to S(r-3)$.
    
    For a point $t\in \p^1$, we have a ruling $R(t)\cong \p^1\subset S(1,r-3)\subset\Lambda_{r-1}$. Denote its intersections with $L_p$, $S(r-3)$, and $\mathcal{C}_p$ as \[l(t)\in L_p,\ s(t)\in S(r-3),\ \text{and }c(t)\in \mathcal{C}_p,\] respectively.
    As $L_p$, $S(r-3)$, and $\mathcal{C}_p$ are projections of $L$, $S(r-2)$, and $\mathcal{C}$ from $p$, we get \[\widetilde{l}(t)\in L,\ \widetilde{s}(t)\in S(r-2),\ \text{and }\widetilde{c}(t)\in \mathcal{C}\] that are projected to $l(t)$, $s(t)$, and $c(t)$, respectively. Note that such liftings are unique as the projections of $L$, $S(r-3)$, and $\mathcal{C}$ from $p$ are all isomorphisms.
    To show the ruling can be lifted, it is enough to show $\widetilde{l}(t)$, $\widetilde{s}(t)$, $\widetilde{c}(t)$ are collinear.
    Suppose not. Note that $L$ can not be contained in the 2-plane $\langle p,\widetilde{l}(t),\widetilde{s}(t),\widetilde{c}(t),l(t),s(t),c(t)\rangle$ since if it is, then $L_p=S(1)\subset S(1,r-3)$ intersects with $S(r-3)\subset S(1,r-3)$ at $s(t)$ which is a contradiction.
    Hence the assumption that $\widetilde{l}(t)$, $\widetilde{s}(t)$, $\widetilde{c}(t)$ are not collinear implies $\langle \widetilde{s}(t),L\rangle \neq \langle \widetilde{c}(t),L\rangle$. As $\widetilde{c}(t)$ is projected to $c(t)$ by $\pi_p$ and $c(t)$ is projected to $s(t)$ by $\pi_{L_p}$, the point $\widetilde{c}(t)$ is projected to $s(t)$ by $\pi_{\langle p,L\rangle}$.
    \[\begin{tikzcd}[sep=large]
        \widetilde{c}(t)\arrow[r,"\pi_p",maps to]\arrow[rd,"\pi_{\langle p,L\rangle}",maps to]&c(t)\arrow[d,"\pi_{L_p}",maps to]\\
        \widetilde{s}(t)\arrow[r,"\pi_p",maps to]&s(t)
    \end{tikzcd}\]
    As $\widetilde{s}(t)$ is the unique point of $S(r-2)$ projected to $s(t)$ by $\pi_p$, the point $\widetilde{c}(t)$ must be projected to $\widetilde{s}(t)$ by $\pi_L$. Hence $\langle \widetilde{s}(t),L\rangle=\langle \widetilde{c}(t),L\rangle$, and we get a contradiction. Therefore $\widetilde{l}(t)$, $\widetilde{s}(t)$, $\widetilde{c}(t)$ are collinear.
\end{proof}

As a consequence, we get \Cref{mainthm:cmr}.

\begin{proof}[Proof of \Cref{mainthm:cmr}]
    First, consider the case $r=4$.
    As $\mathcal{C}$ is contained in $S(0,0,2)$, we obviously have $\qp(\mathcal{C})\geq 1$, so $\qp(\mathcal{C})=1$ or $2$.
    The other statements of this case follows from \Cref{mainthm:codim3}.
    Note that if a surface $S\subset\p^4$ of minimal degree contains $\mathcal{C}$, then it should be a smooth rational normal scroll as $\mathcal{C}$ is not aCM, so $S=S(1,2)$.
    
    Now suppose $r\geq 5$.
    We show the equivalence first.
    As before, the directions (iii)$\Rightarrow$(ii)$\Rightarrow$(i) follows from the totally real $\mathcal{K}_{p,1}$-theorem and the results of \cite{MR4397034}.
    To show (i)$\Rightarrow$(iii), assume $\qp(\mathcal{C})=r-2$.
    By taking general inner projections of $\mathcal{C}\subset\p^r$, we get a curve $\overline{\mathcal{C}}\subset\p^4$ of maximal regularity. Note that $\qp(\mathcal{C})=\qp(\overline{\mathcal{C}})+r-4$.
    Hence $\qp(\overline{\mathcal{C}})=2$, so $\overline{\mathcal{C}}$ is contained in $S(1,2)$. By using \Cref{lem:maxreg_lifting} inductively, we conclude $\mathcal{C}\subset S(1,r-2)$. This shows the equivalence.
    
    The part $\qp(\mathcal{C})\in\{r-3,r-2\}$ follows from $\qp(\overline{\mathcal{C}})\in\{1,2\}$ where $\overline{\mathcal{C}}\subset\p^4$ is defined as above.
    
    Then we show $\py(\mathcal{C})=r+1-\qp(\mathcal{C})$.
    As the case $\qp(\mathcal{C})=r-2$ follows from the above, we assume $\qp(\mathcal{C})=r-3$ and show $\py(\mathcal{C})=4$.
    Note that in the proof of \Cref{lem:maxreg_lifting}, we may assume $p$ is a real point and both $\Lambda_{r-1}$ and $\Lambda_{r-3}$ are totally real. Then $\Lambda_{r-2}=\langle p,\Lambda_{r-3}\rangle$ is also totally real.
    Recall that the projection of $\mathcal{C}$ from its maximal secant line $L$ is $S(r-2)\subset\p^{r-2}$.
    As $\mathcal{C}$ is totally real, $I(\mathcal{C})$ can be generated by real homogeneous polynomials. Fix a minimal generating set $A$ of $I(\mathcal{C})$ consists of real forms. 
    Let $f$ be the unique form of degree $d-r+2$ in $A$ and $J$ be the ideal generated by $A\backslash \{f\}$.
    Then the ideal of $L$ is $J:(f)$ (\textit{cf.} \cite{MR1992539}).
    Hence $L$ is totally real. As $S(r-2)$ is the projection of $\mathcal{C}$ from $L$, it is also totally real by \Cref{prop:tr_projection}. Thus the join of $L$ and $S(r-2)$ is totally real, i.e., $\mathcal{C}$ is contained in a totally real rational normal threefold scroll $S(0,0,r-2)$. Therefore $\py(\mathcal{C})\leq \py(S(0,0,r-2))=4$. Together with $\py(\mathcal{C})\geq r+1-\qp(\mathcal{C})=4$, we conclude $\py(\mathcal{C})=4$.
\end{proof}

\bigskip

\section{Smooth nonhyperelliptic curve of genus 3}\label{sec:nonhyper}

Let $\mathcal{C}$ be a smooth nonhyperelliptic curve of arithmetic  genus $3$ and $\mathcal{L}$ a very ample line bundle on $\mathcal{C}$ of degree $d \geq 6$. Also, let
\begin{equation*}
\mathcal{C} \subset \p^{r}=\p\H^0(\mathcal{C},\l)
\end{equation*}
be the linearly normal embedding of $\mathcal{C}$ by $\mathcal{L}$ where $r=d-3$. For the convenience, we denote by $I(\mathcal{C},\mathcal{L})$ the homogeneous ideal of $\mathcal{C}$ and $\qp (\mathcal{C},\mathcal{L})$ the quadratic persistence of $\mathcal{C} \subset \p^{r}$.

\begin{remark}\label{rem:Homma's work}
In \cite{MR623440}, M. Homma study the projective normality and defining equations of $\mathcal{C} \subset \p^{r}$. Here we review some of her results.
\begin{enumerate}[\rm (1)]
    \item \label{rem:Homma1} When $d = 6$, it holds that $\mathcal{L} \neq \omega_{\mathcal{C}} \otimes \mathcal{O}_{\mathcal{C}} (x+y)$ for any $x,y \in \mathcal{C}$. In this case, $\mathcal{C} \subset \p^3$ is projectively normal and $I(\mathcal{C},\mathcal{L})$ is minimally generated by $4$ cubic polynomials. For details, see Proposition 1.9 and Theorem 2.1 in \cite{MR623440}.
    \item \label{rem:Homma2} When $d=7$, we can write $\mathcal{L} = \omega_{\mathcal{C}} \otimes \mathcal{O}_{\mathcal{C}} (x+y+z)$ for some $x,y,z \in \mathcal{C}$. In this case, $\mathcal{C} \subset \p^4$ is projectively normal and $I(\mathcal{C},\mathcal{L})$ is minimally generated by $3$ quadratic polynomials and $4$ cubic polynomials.
    \item \label{rem:Homma3} When $d \geq 8$, it holds that $\mathcal{C} \subset \p^{r}$ is projectively normal and $I(\mathcal{C},\mathcal{L})$ is minimally generated by quadratic polynomials.
    \item \label{rem:Homma4} When $\mathcal{L} = \omega_{\mathcal{C}} ^2$, it holds that $\mathcal{C} \subset \p^5$ is the second Veronese embedding of the plane quartic model of $\mathcal{C}$. In particular, $\mathcal{C}$ in $\p^5$ is contained in the Veronese surface. 
\end{enumerate}  
\end{remark}
\begin{proof}[Proof of \Cref{thm:nonhyperelliptic curve of genus 3}]
First, we claim (i), (iii), and (iv) are equivalent.
As before, we have $\qp(\mathcal{C},\mathcal{L}) \leq r-2$ by \Cref{thm:BSSV1}.

When $d = 6$, it follows by \Cref{rem:Homma's work}.\eqref{rem:Homma1} that $\qp (\mathcal{C},\mathcal{L}) = 0$, hence $\py(\mathcal{C})=4$.

When $d=7$, we have $\dim_{\mathbb{C}} I(\mathcal{C},\mathcal{L})_2 = 3$ and so $\qp (\mathcal{C},\mathcal{L}) \geq 1$. Thus $\qp (\mathcal{C},\mathcal{L})$ is $1$ or $2$. Now, we will show that 
$$\qp (\mathcal{C},\mathcal{L}) = 2 \  \mbox{if and only if} \  h^0 (\mathcal{C},\mathcal{L} \otimes \omega_{\mathcal{C}} ^{-1}) =2.$$ 
Indeed, $h^0 (\mathcal{C},\mathcal{L} \otimes \omega_{\mathcal{C}} ^{-1}) \in \{1,2\}$. Also, if $h^0 (\mathcal{C},\mathcal{L} \otimes \omega_{\mathcal{C}} ^{-1}) =2$, then the multiplication map
\begin{equation*}\label{equ:multiplication}
\mu : \H^0 (\mathcal{C},\mathcal{L} \otimes \omega_{\mathcal{C}} ^{-1}) \times \H^0 (\mathcal{C}, \omega_{\mathcal{C}}) \rightarrow \H^0 (\mathcal{C},\mathcal{L})
\end{equation*}
defines a rational normal surface scroll containing $\mathcal{C}$. Hence $\qp (\mathcal{C},\mathcal{L}) = 2$. Now, suppose that $h^0 (\mathcal{C},\mathcal{L} \otimes \omega_{\mathcal{C}} ^{-1}) = 1$. Let $x \in \mathcal{C}$ be a general point. Then one can check that 
$$\mathcal{L} \otimes \mathcal{O}_{\mathcal{C}} (-x) \neq \omega_{\mathcal{C}} \otimes \mathcal{O}_{\mathcal{C}} (y+z) \ \mbox{for any} \ y,z \in \mathcal{C}.$$
Therefore the inner projection of $\mathcal{C} \subset \p^4$ from $x$ satisfies no quadratic equation. This proves the desired equality $\qp (\mathcal{C},\mathcal{L}) = 1$.

When $d=8$ and $\mathcal{L} \neq \omega_{\mathcal{C}} ^2$, we write $\mathcal{L} = \omega_{\mathcal{C}} \otimes \mathcal{M}$ where $\mathcal{M}$ is a non-special line bundle of degree $4$. Then $h^0 (\mathcal{C},\mathcal{M})=2$ and hence 
\begin{equation*}
h^0 (\mathcal{C},\mathcal{L} \otimes \mathcal{O}_{\mathcal{C}} (-x) \otimes \omega_{\mathcal{C}} ^{-1} ) = h^0 (\mathcal{C},\mathcal{M} \otimes \mathcal{O}_{\mathcal{C}} (-x)) = 1
\end{equation*}
if $x \in \mathcal{C}$ is not a base point of $\mathcal{M}$. 
This implies that $\qp (\mathcal{C},\mathcal{L} \otimes \mathcal{O}_{\mathcal{C}} (-x) ) = 1$ for general $x \in \mathcal{C}$ and hence $\qp (\mathcal{C},\mathcal{L}) = 2$. 
When $\mathcal{L} = \omega_{\mathcal{C}} ^2$, Remark \ref{rem:Homma's work}.(4) says that $\mathcal{C}$ in $\p^5$ is contained in the Veronese surface. 
Hence $\qp (\mathcal{C},\omega_{\mathcal{C}} ^2 ) = 3$.

When $d = 9$, it holds that $\mathcal{L} \otimes \mathcal{O}_{\mathcal{C}} (-x) \neq \omega_{\mathcal{C}}^2$ for general $x \in \mathcal{C}$. 
Therefore, if $x$ is a general point of $\mathcal{C}$, then $\qp (\mathcal{C},\mathcal{L} \otimes \mathcal{O}_{\mathcal{C}} (-x) ) = 2$. 
Thus $\qp (\mathcal{C},\mathcal{L}) = 3$.

When $d \geq 10$, choose general points $x_1, \ldots, x_{d-9} \in \mathcal{C}$. 
Then $\mathcal{O}_{\mathcal{C}} (-x_1 - \cdots - x_{d-9})$ is of degree $9$ and hence $\qp (\mathcal{C},\mathcal{L} \otimes \mathcal{O}_{\mathcal{C}} (-x_1 - \cdots - x_{d-9}) ) = 3$. Hence $\qp (\mathcal{C},\mathcal{L}) = d-6$. 
This shows the claim.

Finally, the directions (iii)$\Rightarrow$(ii)$\Rightarrow$(i) can be obtained as before.
\end{proof}

\begin{remark}
    When $\qp(\mathcal{C}) = r - 3$, the inequality \eqref{equ:py,qp} provides a lower bound $\py(\mathcal{C}) \geq 4$. 
    On the other hand, an upper bound derived from the Green-Lazarsfeld index (\cite[Theorem 1.1.\rm {(ii)}]{MR4397034}) gives $\py(\mathcal{C}) \leq 5$, thus restricting $\py(\mathcal{C})$ to either $4$ or $5$.
    One can see there exists a threefold $Y$ of minimal degree containing $\mathcal{C}$, but we do not know whether $Y$ is totally real.
    If $Y$ is totally real, then it concludes that $\qp(\mathcal{C}) = r - 3$ if and only if $\py(\mathcal{C}) = 4$ so that $\py(\mathcal{C})=r+1-\qp(\mathcal{C})$.
\end{remark}

\bibliographystyle{amsalpha}
\bibliography{ref.bib}

\end{document}